\title[Maximal representations in lattices]{Maximal representations in lattices of the symplectic group}
\author{Jacques Audibert}
\thanks{Affiliation: Max Planck Institute for Mathematics, Bonn}
\address{
Max Planck Institute for Mathematics, Vivatsgasse 7, 53111 Bonn, Germany.}
\email{audibert.j@outlook.fr}
\theoremstyle{plain}
\newtheorem{proposition}{Proposition}[subsection]
\newtheorem{theorem}[proposition]{Theorem}
\newtheorem{lemma}[proposition]{Lemma}
\newtheorem{thmx}{Theorem}[section]
\newtheorem{propx}[thmx]{Proposition}
\theoremstyle{definition}
\newcommand{\SL}{\textnormal{SL}}
\DeclareMathOperator{\SU}{SU}
\DeclareMathOperator{\GL}{GL}
\DeclareMathOperator{\Qbar}{\overline{\mathds{Q}}}
\DeclareMathOperator{\Gal}{Gal}
\DeclareMathOperator{\Aut}{Aut}
\DeclareMathOperator{\PSL}{PSL}
\DeclareMathOperator{\PGL}{PGL}
\DeclareMathOperator{\Br}{Br}
\DeclareMathOperator{\Det}{Det}
\DeclareMathOperator{\PSp}{PSp}
\DeclareMathOperator{\PO}{PO}
\DeclareMathOperator{\Diag}{Diag}
\DeclareMathOperator{\Nrd}{Nrd}
\newcommand{\Sp}{\textnormal{Sp}}
\DeclareMathOperator{\Imm}{Im}
\DeclareMathOperator{\MCG}{MCG}
\DeclareMathOperator{\HH}{H}
\DeclareMathOperator{\OO}{O}
\def\@tocline#1#2#3#4#5#6#7{\relax\begin{comment}
  \ifnum #1>\c@tocdepth 
  \else
    \par \addpenalty\@secpenalty\addvspace{#2}%
    \begingroup \hyphenpenalty\@M
    \@ifempty{#4}{%
      \@tempdima\csname r@tocindent\number#1\endcsname\relax
    }{%
      \@tempdima#4\relax
    }%
    \parindent\z@ \leftskip#3\relax \advance\leftskip\@tempdima\relax
    \rightskip\@pnumwidth plus4em \parfillskip-\@pnumwidth
    #5\leavevmode\hskip-\@tempdima
      \ifcase #1
       \or\or \hskip 1em \or \hskip 2em \else \hskip 3em \fi%
      #6\nobreak\relax
    \dotfill\hbox to\@pnumwidth{\@tocpagenum{#7}}\par
    \nobreak
    \endgroup
  \fi}
\begin{document}

\maketitle

\begin{abstract}
    We prove that all lattices of $\Sp(2n,\mathds{R})$, except those commensurable with $\Sp(4k+2,\mathds{Z})$ when $n=2k+1$, contain the image of infinitely many mapping class group orbits of Zariski-dense maximal representation that are continuous deformations of \emph{maximal diagonal representations}. In particular, we show that $\Sp(4k,\mathds{Z})$ contain Zariski-dense surface subgroups for all $k\geq 1$.
\end{abstract}

\section{Introduction}

Let $\Lambda$ be a lattice of a Lie group $G$. A subgroup $\Gamma<\Lambda$ is \emph{thin} if it is of infinite index in $\Lambda$ yet Zariski-dense in $G$. Thin subgroups satisfy the Superstrong Approximation Theorem \cite{Golsefidy_Expansioninperfectgroups} and are thus an active field of research, see \cite{Sarnak_NotesThinMatrixGroups} and \cite{Kontorovich_Whatisathingroup} for an introduction. In this paper we construct thin subgroups in lattices of $\Sp(2n,\mathds{R})$, $n\geq2$, isomorphic to the fundamental group of closed orientable surfaces of genus at least $2$, called surface subgroups.

In particular we address the question of whether $\Sp(2n,\mathds{Z})$ contains a Zariski-dense surface subgroup. In this regard, we prove the following. \begin{thmx}
\label{theoremSP4k}
    For any $k\geq1$, $\Sp(4k,\mathds{Z})$  contains Zariski-dense surface subgroups.
\end{thmx}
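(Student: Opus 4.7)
The plan is to deduce Theorem~A from the main result of the paper: $\Sp(4k,\mathds{Z})$ is a lattice in $\Sp(4k,\R)=\Sp(2n,\R)$ with $n=2k$ even, so it lies outside the excluded family $\Sp(4k+2,\mathds{Z})$ (which corresponds to $n=2k+1$ odd). The main theorem should then supply continuous deformations of a maximal diagonal representation of a surface group that land inside $\Sp(4k,\mathds{Z})$ with Zariski-dense image, and any such image is a Zariski-dense surface subgroup.

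To produce a maximal diagonal representation with integer image, I would take a totally real number field $F/\QQ$ of degree $2k$ and an indefinite quaternion algebra $D$ over $F$ that is split at every real place. The norm-one units of a maximal order $\mathcal{O}_D\subset D$ give a cocompact arithmetic Fuchsian group $\Gamma$; by restriction of scalars, $\Gamma$ embeds diagonally in $\Sp(2,\R)^{2k}$ through the $2k$ real embeddings of $F$. The $F$-symplectic form on $D\cong F^2$ descends via the field trace to a $\QQ$-symplectic form on the $4k$-dimensional $\QQ$-vector space $D$; with respect to an appropriate $\mathds{Z}$-lattice this yields an embedding $\Gamma\hookrightarrow\Sp(4k,\mathds{Z})$. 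A torsion-free finite-index surface subgroup of $\Gamma$ then gives a \emph{maximal diagonal representation} $\rho_0\colon\pi_1(\Sigma)\to\Sp(4k,\mathds{Z})$, maximality being visible from the fact that each of the $2k$ diagonal factors is Fuchsian.

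The Zariski closure of $\rho_0(\pi_1(\Sigma))$ is the block-diagonal $\Sp(2,\R)^{2k}$, so $\rho_0$ is not yet Zariski-dense. To spread it out I would use a bending construction along an essential simple closed curve $\gamma\subset\Sigma$: conjugate one vertex group of the associated graph-of-groups decomposition by an integer element of the centralizer $Z_{\Sp(4k,\mathds{Z})}(\rho_0(\gamma))$. The block-diagonal structure provides a large centralizer with many integer points, yielding infinitely many bent representations inside $\Sp(4k,\mathds{Z})$. The main obstacle I expect is to verify Zariski-density of a generic bending: one must classify the algebraic subgroups of $\Sp(4k,\R)$ that could contain the image of such a bent representation, invoke the known structure results on Zariski closures of maximal representations to reduce to a short list of candidate reductive subgroups, and show that only a proper subvariety of centralizer twists fails to escape all of them. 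A counting argument on the resulting family, together with the action of the mapping class group on the representation variety, should then deliver the infinitely many orbits promised by the abstract.
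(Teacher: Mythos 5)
Your opening observation is correct: Theorem~\ref{theoremSP4k} follows immediately from Theorem~\ref{theoremdiagonal}, since $\Sp(4k,\mathds{Z})$ is a lattice in $\Sp(2n,\R)$ with $n=2k$ even and so is not among the excluded lattices $\Sp(4k+2,\mathds{Z})$. That is exactly how the paper deduces it. However, the construction you sketch of a maximal diagonal representation inside $\Sp(4k,\mathds{Z})$ is not the one in the paper, and as written it does not work.

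The restriction-of-scalars construction has two incompatible desiderata. If you take $F/\QQ$ totally real of degree $2k$ and a quaternion algebra $D/F$ split at \emph{every} real place (as you write), then $\mathcal{O}_D^1$ is an irreducible lattice in $\SL(2,\R)^{2k}$; by Margulis it is arithmetic of higher rank and contains no finite-index surface subgroup, so it is not a ``cocompact arithmetic Fuchsian group.'' If instead $D$ splits at exactly one real place of $F$ (which is what one needs to get a Fuchsian group), then $\mathrm{Res}_{F/\QQ}D^1$ has real points $\SL(2,\R)\times\SU(2)^{2k-1}$, the natural module $D$ has $\QQ$-dimension $8k$ (not $4k$), and the compact $\SU(2)$ factors contribute nothing to the Toledo invariant, so the resulting representation into a symplectic group is not maximal and in particular is not a deformation of $\phi_n\circ j$. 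Moreover, nothing in the restriction-of-scalars picture distinguishes $n$ even from $n$ odd, so your argument would ``prove'' that $\Sp(4k+2,\mathds{Z})$ also contains maximal diagonal representations, contradicting Proposition~\ref{propositionclassificationdiagonal}.

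The actual mechanism in the paper is cohomological and is where the even/odd dichotomy lives. One fixes a single cocompact Fuchsian group $\Gamma=\mathcal{O}_\Gamma^1$ coming from a quaternion \emph{division} algebra $(a,b)_\QQ$, and considers the literal diagonal embedding $\phi_n(\Gamma)$ (all $n$ blocks identical). Proposition~\ref{propclassificationlatticephi} then shows that, for $n$ even, a suitable twisting cocycle $\eta\colon\Gal(\Qbar/\QQ)\to\PO(n,\Qbar)$ exists so that $\phi_n(\Gamma)$, after conjugation, lies in the lattice associated to \emph{any} prescribed quaternion algebra over $\QQ$, in particular to $\mathrm{M}_2(\QQ)$, which yields $\Sp(2n,\mathds{Z})$. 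The existence of $\eta$ is exactly the surjectivity of the connecting map $\partial_n\colon\HH^1(\Gal(\Qbar/\QQ),\PO(n,\Qbar))\to\HH^2(\Gal(\Qbar/\QQ),\mu_2)$ proved in Lemma~\ref{lemmapartialsurjective}; for $n$ odd $\partial_n$ is trivial, which is why $\Sp(4k+2,\mathds{Z})$ is excluded. Your sketch omits this central ingredient. The bending step you describe afterwards is broadly in the spirit of Section~\ref{sectiondeformationsofmaximaldiagonalrepresentations}, but it cannot be carried out until one has the correct arithmetic realization of $\phi_n(\Gamma)$ inside $\Sp(4k,\mathds{Z})$.
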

\noindent
These are necessarily thin \cite{Aramayona_GeometricDimensionLattices}. Note that the case of $\Sp(4,\mathds{Z})$ was treated in Long-Thistlethwaite \cite{Long_ZariskidensesurfaceSL4Z} by a different method.

There has been various works on constructing thin surface subgroups in lattices. In their celebrated theorem, Kahn-Markovic \cite{Kahn_Immersingsurfacesinhyerbolicthreemanifold} exhibited surface subgroups in all uniform lattices of $\textrm{SO}(3,1)$. Their technique was dynamical and has been generalized to other Lie groups by Hamenstädt \cite{Hamenstadt_Incompressiblesurfaceslocallysymmetricspaces} and Kahn-Labourie-Mozes \cite{Kahn_Surfacegroupsinuniformlattices}. However these constructions do not apply to $\Sp(2n,\mathds{R})$ \cite[\S 1.2]{Kahn_Surfacegroupsinuniformlattices}. On the other hand, there have been several investigations using \emph{Hitchin representations}, see for instance \cite{Long_ZariskidensesurfaceSL2k+1Z}. In the author's previous work \cite{Audibert_Zariskidensesurfacegroups} and \cite{Audibert_ZariskidenseHitchinrepresentationsuniformlattices}, we constructed thin surface subgroups as images of Hitchin representations in all lattices of the symplectic group except $\Sp(2n,\mathds{Z})$ .

In this paper, we construct \emph{maximal representations} in lattices that are not Hitchin. Maximal representations are defined by the maximality of the Toledo invariant, see \cite{Burger_SurfacegrouperepresentationsmaximalToledo}. They form connected components of $\textrm{Hom}(\pi_1(S_g),\Sp(2n,\mathds{R}))$, for $S_g$ a connected closed orientable surface of genus $g\geq2$, and have been shown to be faithful and discrete by Burger-Iozzi-Wienhard \cite{Burger_SurfacegrouperepresentationsmaximalToledo}. Since then, they have been extensively studied and are now a prototypical example of a higher Teichmüller space \cite{Wienhard_InvitationHigherTeichmuller}.

Let $n\geq2$. Denote by $\phi_n:\SL(2,\mathds{C})\to\Sp(2n,\mathds{C})$ the morphism\footnote{We denote also by $\phi_n$ the induced embedding of $\PSL(2,\mathds{C})$ in $\PSp(2n,\mathds{C})$.}
\begin{align*}
A&\mapsto\begin{pmatrix}
A&&\\
&\dots&\\
&&A
\end{pmatrix}
\end{align*}
which we call \emph{the diagonal embedding}. A representation of the form $\phi_n\circ j$, with $j:\pi_1(S)\to\SL(2,\mathds{R})$ a discrete and faithful representation, is a maximal representation \cite{Guichard_TopologicalinvariantsofAnosovrep}. We call representations of this form \emph{maximal diagonal representations}. Any continuous deformation of a maximal diagonal representation is maximal, and hence faithful. This will play a major role in our construction, that we now describe.

We begin by classifying lattices of $\Sp(2n,\mathds{R})$ that contain the image of a maximal diagonal representation.
The classification is done using non-abelian Galois cohomology, the cohomology theory that classifies forms of algebraic groups and thus arithmetic lattices of Lie groups. The main difficulty here is to deal with the ``twist" of cocycles by the centralizer of $\Imm(\phi_n)$. Because of this, a dichotomy appears between $n$ even and $n$ odd.

\begin{propx}
\label{propositionclassificationdiagonal}
    Every lattice of $\Sp(2n,\mathds{R})$, not widely commensurable\footnote{We say that two subgroup of $\Sp(2n,\mathds{R})$ are \emph{widely commensurable} if, up to conjugation, their intersection has finite index in both of them.} with $\Sp(4k+2,\mathds{Z})$ when $n=2k+1$, contains the image of a maximal diagonal representation of some genus. Conversely, when $n=2k+1$, $\Sp(4k+2,\mathds{Z})$ does not contain the image of a maximal diagonal representations of any genus.
\end{propx}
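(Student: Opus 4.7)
My plan is to translate the problem into non-abelian Galois cohomology, following the strategy alluded to in the introduction. By Margulis arithmeticity, every lattice $\Lambda$ of $\Sp(2n,\mathds{R})$ with $n \geq 2$ is arithmetic, and its wide commensurability class corresponds to a $\mathds{Q}$-algebraic group $G$ of type $C_n$ with $\Sp(2n,\mathds{R})$ as its unique noncompact real factor. By Weil's classification, $G$ takes the form $\Res_{F/\mathds{Q}} \Sp(h; D)$, where $F$ is a totally real number field, $D$ is a quaternion algebra over $F$ split at exactly one infinite place $v_0$ and ramified at the others, and $h$ is a Hermitian form on $D^n$ whose signature is definite at each ramified infinite place. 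The class of $\Sp(2n,\mathds{Z})$ corresponds to $F = \mathds{Q}$, $D = M_2(\mathds{Q})$. A maximal diagonal representation $\phi_n \circ j$ of a closed surface has $j(\pi_1(S))$ cocompact in $\SL(2,\mathds{R})$, so its image lies in $\Lambda$ (up to finite index) if and only if we can find a division quaternion algebra $B$ over $F$, split at $v_0$ and ramified at the other infinite places, together with an $F$-embedding $\iota : \SL_1(B) \hookrightarrow \Sp(h; D)$ whose real incarnation at $v_0$ is $\Sp(2n,\mathds{R})$-conjugate to $\phi_n$.

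The key cohomological input is the centralizer $Z$ of $\phi_n(\SL_2)$ inside $\Sp_{2n}$. Decomposing $\mathds{C}^{2n} = \mathds{C}^n \otimes \mathds{C}^2$ so that $\phi_n$ acts as $I_n \otimes (-)$, Schur's lemma yields $Z \cong \OO(n, \mathds{C})$, with the symplectic form on $\mathds{C}^{2n}$ expressed as the tensor product of a nondegenerate symmetric form on $\mathds{C}^n$ with the standard symplectic form on $\mathds{C}^2$. The normalizer $N$ is then $Z \cdot \phi_n(\SL_2)$, with central intersection $\{\pm I_{2n}\}$. Pairs $(H, \iota)$ over $F$ of an $F$-form of $\Sp_{2n}$ with a compatible embedding from some $\SL_1(B)$ are parameterized by $H^1(F, N)$, and the natural projections give $H^1(F, N) \to H^1(F, \PSp_{2n}) \times H^1(F, \PGL_2)$, recovering the form $H = \Sp(h; D)$ and the quaternion algebra $B$. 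The ``twist by the centralizer'' flagged in the introduction is exactly the contribution of $H^1(F, Z) = H^1(F, \OO(n))$, which parameterizes quadratic forms on $F^n$ and mediates how the orthogonal twist propagates to the Hermitian form $h$ on the $\Sp_{2n}$ side.

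The $n$-even versus $n$-odd dichotomy emerges in the explicit realization of every commensurability class. When $D$ is a division quaternion algebra over $F$, the tautological choice $B = D$ with the diagonal embedding $\SL_1(D) \hookrightarrow \Sp(h; D)$ works for every admissible $h$, handling every class where the $F$-form of $\Sp_{2n}$ is already non-split. The only split case giving a lattice in $\Sp(2n,\mathds{R})$ alone is $F = \mathds{Q}$, $D = M_2(\mathds{Q})$, yielding $\Sp_{2n, \mathds{Q}}$ and $\Sp(2n, \mathds{Z})$. Here, an embedding $\SL_1(B) \hookrightarrow \Sp_{2n, \mathds{Q}}$ with $B$ a division quaternion algebra requires endowing $\mathds{Q}^{2n}$ with a $B$-module structure preserving a symplectic form; since $\dim_\mathds{Q} B = 4$, this is possible only when $2 \mid n$. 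For even $n$ I would construct $B$ via class field theory (a division quaternion algebra over $\mathds{Q}$ split at $\infty$ and ramified at two chosen finite primes), realize $\mathds{Q}^{2n} = B^{n/2}$ as a $B$-module, and equip it with a skew-Hermitian form over $B$ whose reduced trace recovers the symplectic form on $\mathds{Q}^{2n}$; this produces the desired $\iota$ conjugate to $\phi_n$ over $\mathds{R}$. For odd $n$ no such $B$ exists, and this is exactly the obstruction that excludes $\Sp(4k+2,\mathds{Z})$.

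For the converse, suppose $\phi_n(j(\pi_1(S))) \subset \Sp(4k+2,\mathds{Z})$ with $n = 2k+1$. The $\mathds{Q}$-Zariski closure of this subgroup in $\Sp_{2n,\mathds{Q}}$ is a $\mathds{Q}$-form of $\SL_2$, hence of the shape $\SL_1(B)$ for some quaternion algebra $B$ over $\mathds{Q}$ split at infinity, and cocompactness of $j(\pi_1(S))$ forces $B$ to be a division algebra; the dimension count above then rules this out for $n$ odd, a contradiction. The main obstacle I anticipate is in the forward direction: I must verify that the class in $H^1(F, N)$ I construct projects to the prescribed commensurability class in $H^1(F, \PSp_{2n})$, i.e., that the Hermitian form produced by my construction matches the one classifying the given lattice. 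This amounts to a careful Hasse-principle comparison between quadratic forms on $F^n$ and Hermitian forms on $D^n$, with the parity of $n$ governing the delicate structure of $\OO(n)/\{\pm I\}$ that controls how centralizer cocycles propagate to $\Sp_{2n}$-cocycles.
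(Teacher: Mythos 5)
Your proposal is correct, but it reaches Proposition B by a genuinely different route than the paper. The paper proves the sharper Proposition~\ref{propclassificationlatticephi} (which pins down, for a \emph{fixed} Fuchsian group $\Gamma\simeq\mathcal{O}_\Gamma^1$, exactly which commensurability classes of lattices contain $\phi_n(\Gamma)$) via the factorization $\zeta=\eta\otimes\xi$ of compatible cocycles (Lemma~\ref{lemmaphicompatiblecocycle}), the product formula $\delta_{2n}(\eta\otimes\xi)=\delta_2(\xi)\partial_n(\eta)$ (Lemma~\ref{lemmaproductfactorset}), and the key parity dichotomy that the connecting map $\partial_n\colon \HH^1(\Gal(\Qbar/F),\PO(n,\Qbar))\to\HH^2(\Gal(\Qbar/F),\mu_2)$ is surjective for $n$ even and trivial for $n$ odd, the latter being detected by the determinant cochain $\sigma\mapsto\Det(M_\sigma)$ (Lemma~\ref{lemmapartialsurjective}). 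You instead set up the normalizer $H^1(F,N)$ framework but then argue more representation-theoretically: for the converse you use a Schur-index/dimension count (a faithful $\Q$-rational $\SL_1(B)$-module with $B$ a division quaternion algebra has $\Q$-irreducible constituents of dimension $4$, so $4\mid 2n$ forces $n$ even), and for the forward direction you give explicit realizations (the tautological $B=D$ for $D$ a division algebra, and for $D=\textrm{M}_2(\Q)$, $n$ even, a $B$-module structure on $\Q^{2n}$ with a skew-Hermitian form whose reduced trace gives the symplectic form). These are two faces of the same obstruction: your dimension count is the ``module-theoretic'' shadow of the paper's cohomological observation that $-I_n\in\SO(n)$ has determinant $(-1)^n$, so the extension $1\to\mu_2\to\OO(n)\to\PO(n)\to1$ splits precisely when $n$ is odd. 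Your approach is more elementary and arguably more transparent for the parity obstruction, and it suffices for Proposition~\ref{propositionclassificationdiagonal}; the paper's cocycle machinery buys the stronger Proposition~\ref{propclassificationlatticephi} (e.g.\ that for $n$ even a fixed $\phi_n(\Gamma)$ embeds, up to commensurability, in the lattice attached to \emph{every} quaternion algebra over $F$), which is needed in the proof of Theorem~\ref{theoremdiagonal}. One thing to make explicit in a full write-up: you must still check, as you anticipate, that the explicit embedding you build for $D=\textrm{M}_2(\Q)$, $n$ even, becomes $\Sp(2n,\R)$-conjugate to $\phi_n$; this follows because any $\SL_2(\R)$-equivariant symplectic structure on $n$ copies of the standard representation is conjugate to the standard one by an element of the centralizer $\GL(n,\R)\otimes I_2$, and you should also invoke the Borel--Harish-Chandra compatibility (as the paper does via \cite[Prop.~5.2, App.~A]{Milne_LieAlgebrasAlgebraicGroupsLieGroups}) to pass from an $F$-embedding of algebraic groups to an inclusion of arithmetic subgroups up to finite index.
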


See Proposition \ref{propclassificationlatticephi} for a more precise version of this result, which strengthens the distinction between the even and the odd case. Given a lattice $\Lambda$ that contains the image of a maximal diagonal representation of some surface group, we deform the latter to have Zariski-dense image but still lie in $\Lambda$. This is done by a ``bending" technique, as introduced by Johnson-Millson \cite{Johnson_DeformationSpacesCompactHyperbolicManifolds}. This requires to find a simple closed curve on the surface and an element of $\Lambda$ that commutes with its image. If the curve is separating, the bending consists of conjugating part of the representation by the element of $\Lambda$. If it is not separating, it can be described in terms of HNN-extension. Using Galois cohomology, we are able to compute the centralizer of the simple closed curve in $\Lambda$. Using the classification of Zariski-closures of maximal representations by Hamlet-Pozzetti \cite{Hamlet_Classificationtighthomomorphisms}, we show that there are ways to bend the maximal diagonal representation that make it Zariski-dense. This yields the following theorem, of which Theorem \ref{theoremSP4k} is a consequence.

\begin{thmx}
\label{theoremdiagonal}
    Let $n\geq2$ and $\Lambda$ be a lattice of $\Sp(2n,\mathds{R})$ not widely commensurable with $\Sp(4k+2,\mathds{Z})$ when $n=2k+1$. Then there exists $g\geq2$ such that $\Lambda$ contains infinitely many $\MCG(S_g)$-orbits of Zariski-dense maximal representations of $\pi_1(S_g)$. Furthermore these representations are continuous deformations of maximal diagonal representations.
\end{thmx}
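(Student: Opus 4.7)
The plan is to invoke Proposition \ref{propositionclassificationdiagonal} to obtain, for a suitable genus $g\geq 2$, a maximal diagonal representation $\rho_0 = \phi_n\circ j : \pi_1(S_g)\to \Sp(2n,\R)$ whose image lies in $\Lambda$, and then to deform $\rho_0$ by a Johnson--Millson bending construction so that the resulting representations remain in $\Lambda$ while becoming Zariski-dense. Because the maximal locus is a union of connected components of $\Hom(\pi_1(S_g),\Sp(2n,\R))$, any continuous deformation of $\rho_0$ is automatically maximal, and hence faithful and discrete by Burger--Iozzi--Wienhard.

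To bend, I must choose a simple closed curve $\gamma\subset S_g$ together with an element $c\in\Lambda$ commuting with $\rho_0(\gamma)=\phi_n(j(\gamma))$ but not centralizing the full image $\phi_n(\SL(2,\R))$. The centralizer of $\rho_0(\gamma)$ in $\Sp(2n,\R)$ is generated by the centralizer of $j(\gamma)$ in the diagonal copy of $\SL(2,\R)$ together with the block centralizer $C_{\Sp(2n,\R)}(\phi_n(\SL(2,\R)))$, an orthogonal-type group built from the $\GL(n,\R)$-block structure. I will intersect this centralizer with $\Lambda$ using non-abelian Galois cohomology, in direct parallel with the proof of Proposition \ref{propositionclassificationdiagonal}. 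Outside the excluded commensurability class this computation should yield an element $c$ of infinite order whose action on the block decomposition defining $\phi_n$ is non-trivial.

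Given such a pair $(\gamma,c)$, bending along the cyclic subgroup $\langle c\rangle$ (by conjugating one side of $\gamma$ by $c^m$ in the separating case, or performing the corresponding HNN deformation in the non-separating case) produces, for every integer $m$, a continuous deformation $\rho_m$ with image in $\Lambda$. Zariski-density of the generic $\rho_m$ will be extracted from the Hamlet--Pozzetti classification: the Zariski closure of a maximal representation must lie on a short explicit list of tight subgroups of $\Sp(2n,\R)$. Since the closure of $\rho_0$ is $\phi_n(\SL(2,\R))$ and $c$ is chosen outside its normalizer, the closure of $\rho_m$ must strictly enlarge for $m\neq 0$. I then check, case by case against the Hamlet--Pozzetti list, that no proper tight subgroup can contain the deformed image for more than finitely many $m$; hence $\rho_m$ is Zariski-dense for cofinitely many $m\in\mathds{Z}$. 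I expect this step --- eliminating the intermediate tight Zariski closures --- to be the main obstacle, and the reason the explicit form of $c$ from the centralizer computation will need to be pinned down so carefully.

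For the final count of $\MCG(S_g)$-orbits, I will distinguish the $\rho_m$ via a trace invariant: evaluated on a curve crossing $\gamma$, the trace of $\rho_m$ is a non-constant function of $m$, whereas $\MCG(S_g)$ acts on the character variety through countably many permutations of the coordinate trace functions. A countable-versus-infinite argument then isolates infinitely many distinct $\MCG(S_g)$-orbits among the Zariski-dense $\rho_m$, completing the proof.
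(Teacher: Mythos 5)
Your outline captures the right high-level shape — locate a maximal diagonal representation inside $\Lambda$ via Proposition \ref{propositionclassificationdiagonal}, find a simple closed curve $\gamma$ with a large centralizer in $\Lambda$, bend \`a la Johnson--Millson, and use Hamlet--Pozzetti to control the Zariski closure --- but there are two genuine gaps, and the second is fatal as written.

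First, you propose to bend $\rho_0=\phi_n\circ j$ directly by an element $c$ of the centralizer of $\rho_0(\gamma)$, and then ``check case by case against the Hamlet--Pozzetti list'' which proper tight subgroups could contain the deformed image. You rightly flag this as the main obstacle, but you do not resolve it, and doing it directly is genuinely painful: the Zariski closure after a single bending is generated by $\phi_n(\SL(2,\R))$ and one conjugate $c^m\phi_n(\SL(2,\R))c^{-m}$, and ruling out all intermediate tight Hermitian subgroups by hand is unpleasant and $m$-dependent. The paper sidesteps this with an extra deformation \emph{before} bending: it replaces $\phi_n\circ j$ by $\rho=\prod_i(j\circ\varphi_i)$ where the $\varphi_i$ are Dehn twists fixing $\gamma$, chosen so that the $j_i$ are pairwise non-conjugate; then $\rho$ is still a continuous deformation of $\phi_n\circ j$ but now has Zariski closure the full block-diagonal $\Delta\simeq\SL(2,\R)^n$. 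After bending $\rho$ by a $B\in\Lambda\cap\mathcal{C}$ chosen (via Borel density) to preserve no nontrivial sub-sum $\oplus_{i\in I}V_i$, the bent representation $\rho_B$ acts irreducibly and its closure still contains $\Delta$; Lemma \ref{lemmamaximalzariskiclosure} then gives Zariski-density in one shot. The pre-twist that enlarges the closure to $\Delta$ is the missing idea in your sketch --- it converts the ``eliminate intermediate tight closures'' problem into a single clean lemma.

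Second, your argument for infinitely many $\MCG(S_g)$-orbits does not work. The family $\rho_m$, $m\in\mathds{Z}$, is countable, and $\MCG(S_g)$ is countable, so a ``countable versus infinite'' dichotomy cannot separate orbits: a priori all the $\rho_m$ could lie in finitely many orbits. Moreover, the trace of $\rho_m$ on a fixed curve $\delta$ crossing $\gamma$ is not an $\MCG$-orbit invariant, since a mapping class moves $\delta$. The paper instead uses an arithmetic invariant: for each Zariski-dense $\rho_{B^k}$, Strong Approximation (Theorem \ref{propstrongapproximationadapted}) gives a finite exceptional set of primes outside of which the reduction of the image surjects onto $\Sp(2n,\mathcal{O}_F/\mathcal{P})$, and Lemma \ref{lemmareduction} shows that this surjectivity is invariant under conjugation of the image inside $\SU(I_n,\overline{\phantom{s}};\mathcal{O})$, hence is an $\MCG$-orbit invariant. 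Choosing a prime $\mathcal{P}$ outside the exceptional sets of any putative finite list of orbit representatives and a $k$ with $B^k\equiv I\ (\mathrm{mod}\ \mathcal{P})$ makes $\rho_{B^k}$ reduce to the non-surjective $\rho$ mod $\mathcal{P}$, a contradiction. You will need to replace your trace argument with a reduction-mod-primes argument of this type.

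Finally, two smaller points: the existence of a simple closed curve $\gamma$ with $j(\gamma)$ diagonal is not automatic and is supplied by a lemma from an earlier paper (cited in the proof); and your description of the centralizer of $\rho_0(\gamma)$ as generated by the diagonal torus and $C_{\Sp(2n,\R)}(\phi_n(\SL(2,\R)))$ is too small --- Lemma \ref{lemmacommutatorphiA} shows it is an entire copy of $\GL(n,\R)$, and the extra room (specifically the $\SL(n,\R)$ part $\mathcal{C}$, shown to meet $\Lambda$ in a lattice by Lemma \ref{lemmacommutatorlattice}) is essential for the Borel density choice of $B$.
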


Here $\MCG(S_g)$ denote the mapping class group of $S_g$. It acts on conjugacy classes of representations by precomposition. We distinguish different $\MCG(S_g)$-orbits of representations in $\Lambda$ by the set of primes for which its reduction surjects. We know that, for Zariski-dense representations, this set is finite by the Strong Approximation Theorem \cite{Matthews_CongruencepropertiesofZariskidensesubgroups}. In Lemma \ref{lemmareduction}, we show that it is an invariant of mapping class group orbits.
\\

\noindent
\textbf{Organisation of the paper.}
In Section \ref{sectionBackground} we provide the necessary background on quaternion algebras and non-abelian Galois cohomology. In Section \ref{sectionthediagonalembedding}, we classify lattices of the symplectic group that contain the image of a maximal diagonal representation. Finally, in Section \ref{sectiondeformationsofmaximaldiagonalrepresentations}, we deform maximal diagonal representations into Zariski-dense ones and prove that this construction produces infinitely many mapping class group orbits of representations.
\\

\section{Background}
\label{sectionBackground}

\subsection{Quaternion algebras}
\label{subsectionformsofthesymplecticgroup}

Let $k$ be a field of characteristic different from $2$. A \emph{quaternion algebra} over $k$ is a $4$-dimensional associative $k$-algebra which admits a basis $\{1,i,j,ij\}$ such that $1$ is the identity element and \begin{equation*}
        i^2=a1,\ j^2=b1\ \textrm{and}\ ij=-ji
    \end{equation*} with $a,b\in k^{\times}$. We denote such a quaternion algebra by $(a,b)_k$.
Note that a quaternion algebra is necessarily non commutative. The subspace spanned by $1$ can be identified with $k$.

Conversely, for any choice of $a$ and $b$ non zero, there is a quaternion algebra $(a,b)_k$ defined by the relations above. For instance, the quaternion algebra $(1,1)_k$ is isomorphic to $\textrm{M}_2(k)$. Over a number field, there are infinitely many non-isomorphic quaternion algebras \cite[Theorem 7.3.6]{Maclachlan_ArithmeticHyperbolic3Manifolds}.

Every quaternion algebra $A$ comes with a \emph{conjugation}, define as
\begin{equation*}
    x=x_0+x_1i+x_2j+x_3ij\mapsto\overline{x}=x_0-x_1i-x_2j-x_3ij.
\end{equation*}
It allows one to define the \emph{reduced norm} as $\Nrd:A\to k,\ x\mapsto x\overline{x}$.

Let $F$ be a number field and $\mathcal{O}_F$ its ring of integers. Let $A$ be a quaternion algebra over $F$. An \emph{order} of $A$ is a finitely generated $\mathcal{O}_F$-submodule of $A$ containing 1 which generates $A$ as a vector space and which is a subring of $A$.
For instance, if $a,b\in\mathcal{O}_F$ are non-zero, then $\mathcal{O}_F[i,j,ij]$ is an order of $(a,b)_F$. Note that orders are the anologues of rings of integers for number fields.

Suppose that $F$ is a totally real number field. Given an embedding $\sigma:F\hookrightarrow\mathds{R}$, we denote by $\mathds{R}^{\sigma}$ the field $\mathds{R}$ with the $F$-alebgra structure coming from $\sigma$. We say that $A$ \emph{splits} over $\sigma$ if $A\otimes_F\mathds{R}^{\sigma}\simeq\textrm{M}_2(\mathds{R})$. Suppose that $A$ splits exactly at only at the real place $\sigma$. Then, given an order $\mathcal{O}$ of $A$,
\begin{equation*}
    \mathcal{O}^1:=\{x\in\mathcal{O}\mid\Nrd(x)=1\}
\end{equation*}
is a lattice in $\{x\in A\otimes_F\mathds{R}^{\sigma}\mid\Nrd(x)=1\}\simeq\SL(2,\mathds{R})$.
It is non-uniform if and only if $F=\mathds{Q}$ and $A\simeq\textrm{M}_2(\mathds{Q})$.
Denote by $\SL(n,\mathcal{O})$ the set of $n$-by-$n$ matrices with coefficients in $\mathcal{O}$ that have determinant $1$ when viewed in $\textrm{M}_n(A\otimes_F\mathds{R}^{\sigma})\simeq\textrm{M}_{2n}(\mathds{R})$.
Let us define
\begin{equation*}
\SU(I_n,\overline{\phantom{s}};\mathcal{O}):=\{M\in\SL(n,\mathcal{O})\mid\overline{M}^t M=I_n\}
\end{equation*}
where $M^t$ is the transpose of $M$ viewed as an $n$-by-$n$ matrix.
The subgroup $\SU(I_n,\overline{\phantom{s}};\mathcal{O})$ is a lattice of $\Sp(2n,\mathds{R})$ which is non-uniform if and only if $F=\mathds{Q}$ \cite{Morris_IntroductionArithmeticGroups}. Conversely, all lattices of $\Sp(2n,\mathds{R})$ are widely commensurable to one of those.

\begin{proposition}
\label{propclassifactionlattice}
Every lattice in $\Sp(2n,\mathds{R})$ is widely commensurable with $\SU(I_n,\overline{\phantom{s}};\mathcal{O})$ for $\mathcal{O}$ an order in a quaternion algebra $A$ over a totally real number field $F$ such that $A$ splits over exactly one real place of $F$.
\end{proposition}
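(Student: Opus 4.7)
The plan is to combine Margulis's arithmeticity theorem with the Tits--Weil classification of $F$-forms of $\Sp_{2n}$. Since $\Sp(2n,\R)$ has real rank $n\geq 2$, Margulis implies every lattice $\Gamma$ is arithmetic: there exist a number field $F$ and an absolutely almost simple simply-connected $F$-group $\mathbf{G}$ such that $(\Res_{F/\QQ}\mathbf{G})(\R)$ is isogenous to $\Sp(2n,\R)\times K$ with $K$ compact, and $\mathbf{G}(\mathcal{O}_F)$ is commensurable with the preimage of $\Gamma$ under this isogeny. Comparing archimedean factors forces $F$ to be totally real, with a distinguished real place $\sigma_0$ at which $\mathbf{G}(F^{\sigma_0})\simeq\Sp(2n,\R)$ and $\mathbf{G}(F^\sigma)$ compact at every other real place.

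Next I would classify the $F$-forms of $\Sp_{2n}$. Since type $C_n$ has no outer automorphisms for $n\geq 2$, all forms are inner, and the standard classification identifies $\mathbf{G}\simeq\SU(h;A,\bar{\phantom{s}})$ for a quaternion algebra $A/F$ with its standard involution and a non-degenerate Hermitian form $h$ on $A^n$. The archimedean constraints translate directly: at $\sigma_0$, isomorphism with $\Sp(2n,\R)$ is equivalent, via the Morita identification of Hermitian forms on $M_2(\R)^n$ with symplectic forms on $\R^{2n}$, to $A\otimes_F\R^{\sigma_0}\simeq M_2(\R)$; at each other real place $\sigma$, compactness of $\mathbf{G}(F^\sigma)$ forces $A\otimes_F\R^{\sigma}\simeq\mathds{H}$ and $h\otimes\R^\sigma$ to be definite (yielding the compact group $\Sp(n)$). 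Hence $A$ splits at exactly one real place and ramifies at all others, with $h$ definite at every ramified real place.

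Finally I would reduce the Hermitian form to $I_n$ using the Hasse principle for Hermitian forms over a quaternion algebra (Landherr): two such forms are equivalent over $F$ iff they are equivalent at every completion. At $\sigma_0$ all nondegenerate Hermitian forms of rank $n$ are Morita-equivalent to the standard symplectic form, hence mutually equivalent; at each ramified real place the definite forms of rank $n$ over $\mathds{H}$ are equivalent to $\pm I_n$, and $\SU(-h)=\SU(h)$ reduces this to $I_n$; at every finite place the reduced norm $\Nrd:(A\otimes F_v)^\times\to F_v^\times$ is surjective, so nondegenerate Hermitian forms of rank $n$ over $A\otimes F_v$ are classified by rank alone. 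Thus $h\sim I_n$ over $F$ and $\mathbf{G}\simeq\SU(I_n;A,\bar{\phantom{s}})$. Borel--Harish-Chandra then gives $\SU(I_n,\bar{\phantom{s}};\mathcal{O})$ as a lattice commensurable with the image of $\mathbf{G}(\mathcal{O}_F)$ for any order $\mathcal{O}$ of $A$, while Godement's compactness criterion yields non-uniformity iff $\mathbf{G}$ is $F$-isotropic, iff $A$ is $F$-split, which under the real-place constraint forces $F=\QQ$. The main obstacle is this very reduction to $h=I_n$: it hinges on Landherr's Hasse principle together with the fact that at ramified finite places there are no local invariants of Hermitian forms beyond rank (from surjectivity of the reduced norm); without these inputs one would only obtain the weaker classification with a general Hermitian matrix in place of $I_n$.
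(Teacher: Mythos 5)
The paper itself outsources this proof to Proposition 1.5 of \cite{Audibert_Zariskidensesurfacegroups} and Proposition 1.10 of \cite{Audibert_ZariskidenseHitchinrepresentationsuniformlattices}, so a line-by-line comparison is not possible here; your route via Margulis arithmeticity, the Tits classification of inner forms of type $C_n$ as $\SU_n(h;A,\bar{\phantom{s}})$, and Landherr's Hasse principle is the standard argument and is in essence the same strategy.

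There is, however, a genuine gap in the reduction of $h$ to $I_n$. Using only $\SU(-h)=\SU(h)$ does not let you normalize the signature \emph{independently} at each ramified real place: if $h$ is positive definite at $\sigma_1$ and negative definite at $\sigma_2$, then $-h$ is negative definite at $\sigma_1$ and positive definite at $\sigma_2$, and neither $h$ nor $-h$ is locally isometric to $I_n$ at both places, so Landherr does not apply directly. The correct repair is to exploit the full similarity invariance $\SU(ch)=\SU(h)$ for every $c\in F^\times$, and to choose, by weak approximation, a $c\in F^\times$ whose sign under each ramified real embedding matches the sign of the local signature of $h$; then $ch$ is positive definite at every ramified real place, agrees locally with $I_n$ at all places, and Landherr's Hasse principle gives $ch\simeq I_n$ over $F$. (Equivalently, for $C_n$ the $F$-isomorphism class of $\SU_n(h;A)$ is determined by $A$ together with the \emph{similarity} class of $h$, and one should phrase the local--global reduction in terms of similarity rather than isometry from the start.)

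A second, tangential, inaccuracy: the non-uniformity criterion at the end is wrong. The group $\SU(I_n;A)$ is $F$-isotropic exactly when $F=\mathds{Q}$, not when $A$ is $F$-split. Indeed, if $F=\mathds{Q}$ and $A$ is a division algebra split at infinity, the trace form $\sum_{i=1}^n\Nrd(x_i)$ is a $4n\geq 8$-variable quadratic form, indefinite at $\infty$ and (being in at least five variables) isotropic at every finite place, hence isotropic over $\mathds{Q}$; so $I_n$ represents zero over $A$ and the lattice is non-uniform even though $A$ is not split. This matches the paper's assertion that non-uniformity holds if and only if $F=\mathds{Q}$. This last point does not affect the proposition being proved, but the criterion as you stated it is false.
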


See \cite[Proposition 1.5]{Audibert_Zariskidensesurfacegroups} and \cite[Proposition 1.10]{Audibert_ZariskidenseHitchinrepresentationsuniformlattices} for a proof. Note that, given two distinct orders $\mathcal{O}$ and $\mathcal{O}'$ of the same quaternion algebra, the lattices $\SU(I_n,\overline{\phantom{s}},\mathcal{O})$ and $\SU(I_n,\overline{\phantom{s}},\mathcal{O}')$ are commensurable.

\subsection{Galois cohomology sets}
\label{subsectionGaloiscohomology}

The main reference for this section is \S1.3.2 of Platonov-Rapinchuck \cite{Platonov_AlgebraicgroupsNumbertheory}.
Let $A$ be a topological group acting continuously on a discrete group $M$ which is not necessarily abelian. A \emph{1-cocycle} is a continuous map $f:A\to M$ that satisfies
$$f(st)=f(s)s(f(t))$$ for all $s,t\in A$.
We say that two 1-cocycles $f$ and $f'$ are \emph{equivalent} if there exists an element $m\in M$ such that $$f'(s)=m^{-1}f(s)s(m)$$ for all $s\in A$. We denote by $\HH^1(A,M)$ the set of equivalence classes of $1$-cocycles and call it \emph{the first cohomology set of $A$ (with coefficients in $M$)}. It is not necessarily a group but a set with distinguished element. However, when $M$ is abelian, $\HH^1(A,M)$ is the usual first continuous cohomology group.

The setting we will be interested in is the following. Let $F$ be a number field. Consider $\Gal(\Qbar/F)$ with the profinite topology acting by conjugation on the automorphism group\footnote{We restrict to algebraic automorphisms.} $\Aut(\Sp(2n,\Qbar))\simeq\PSp(2n,\Qbar)$ endowed with the discrete topology. In this case, the set $\HH^1(\Gal(\Qbar/F),\PSp(2n,\Qbar))$ classifies \emph{$\Qbar/F$-forms of $\Sp_{2n}$}, i.e$.$ algebraic groups $\mathds{G}$ defined over $F$ which are isomorphic to $\Sp_{2n}$ over $\Qbar$.
Indeed, given a $1$-cocycle $\zeta:\Gal(\Qbar/F)\to\PSp(2n,\Qbar)$, one can associate a $\Qbar/F$-form of $\Sp_{2n}$ as
    \begin{equation*}
        \prescript{}{\zeta}{\mathds{G}}(R):=\{g\in\mathds{G}(\Qbar\otimes_FR)\mid \zeta(\sigma)(\sigma(g))\zeta(\sigma)^{-1}=g\ \forall\sigma\in\Gal(\Qbar/F)\}
    \end{equation*}
for any $F$-algebra $R$.

\begin{proposition}[Proposition 1 Chapter III \cite{Serre_GaloisCohomology}]
    The map that associates to a $1$-cocycle a $\Qbar/F$-form of $\Sp_{2n}$
    defines a bijection between $\HH^1(\Gal(\Qbar/F),\PSp(2n,\Qbar))$ and the set of isomorphism classes of $\Qbar/F$-forms of $\Sp_{2n}$.
\end{proposition}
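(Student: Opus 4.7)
The plan is to prove the bijection by the standard Galois descent argument, in three pieces: well-definedness of the forward map on equivalence classes, construction of an inverse, and verification that the two compositions are the identity.

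First, for well-definedness of $[\zeta]\mapsto {}_\zeta\mathds{G}$, suppose $\zeta'(\sigma)=m^{-1}\zeta(\sigma)\sigma(m)$ for some $m\in\PSp(2n,\Qbar)$. A direct check shows that conjugation by $m$ intertwines the two twisted Galois actions defining $_{\zeta'}\mathds{G}$ and $_\zeta\mathds{G}$, so it restricts to an $F$-isomorphism of the fixed-point subschemes. For the inverse map, take a $\Qbar/F$-form $\mathds{G}$ and fix any $\Qbar$-isomorphism $\phi:\mathds{G}_{\Qbar}\to\Sp_{2n,\Qbar}$, which exists by definition of being a form. For each $\sigma\in\Gal(\Qbar/F)$, set $\zeta_\phi(\sigma):=\phi\circ\sigma(\phi)^{-1}$. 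This lies in $\Aut(\Sp_{2n,\Qbar})\simeq\PSp(2n,\Qbar)$, the identification using that the Dynkin diagram $C_n$ has trivial outer automorphism group so every $\Qbar$-automorphism of $\Sp_{2n}$ is inner. A short calculation gives the cocycle identity; replacing $\phi$ by $\psi\circ\phi$ with $\psi\in\PSp(2n,\Qbar)$ alters $\zeta_\phi$ by the coboundary $\sigma\mapsto\psi\zeta_\phi(\sigma)\sigma(\psi)^{-1}$, so the class $[\zeta_\phi]\in\HH^1$ depends only on the $F$-isomorphism class of $\mathds{G}$.

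Next, I would check that the two constructions are mutually inverse. Starting from $\zeta$, the inclusion $({}_\zeta\mathds{G})(\Qbar)\hookrightarrow\Sp(2n,\Qbar)$ extends to a tautological $\Qbar$-isomorphism whose associated cocycle is literally $\zeta$. Going the other direction, the chosen $\phi$ becomes an $F$-isomorphism $\mathds{G}\xrightarrow{\sim}{}_{\zeta_\phi}\Sp_{2n}$, since by construction the twist by $\zeta_\phi$ exactly absorbs the Galois defect of $\phi$: the points of $\mathds{G}(\Qbar)$ are characterised as those $g$ with $\sigma(g)=g$, while $\phi$ sends them to the elements $h:=\phi(g)$ with $\sigma(h)=\phi(\sigma(\phi)^{-1}(h))=\zeta_\phi(\sigma)^{-1}h\,\zeta_\phi(\sigma)$, i.e.\ the defining condition of $_{\zeta_\phi}\Sp_{2n}$.

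The step I expect to absorb most of the technical effort is the descent argument ensuring that $_\zeta\mathds{G}$ really is an affine algebraic group scheme defined over $F$, and not merely a functor of points. By continuity, $\zeta$ factors through a finite Galois quotient $\Gal(L/F)$ for some finite extension $L/F$ inside $\Qbar$; one then needs that the twisted semilinear action of $\Gal(L/F)$ on the Hopf algebra $L[\Sp_{2n}]$ has fixed subring $A$ satisfying $A\otimes_F L\simeq L[\Sp_{2n}]$, which is the effective form of Galois descent for affine schemes. Granting this (a standard theorem), everything else is formal manipulation of the defining cocycle identity, and the identification of $\Aut(\Sp_{2n})$ with $\PSp(2n)$ slots the automorphism-group coefficients into the cocycle framework used throughout the paper.
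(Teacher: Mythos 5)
The paper does not prove this proposition; it cites it verbatim from Serre's \emph{Galois Cohomology} (Proposition 1, Chapter III) and treats it as a black box. Your argument is precisely the standard Galois-descent proof that Serre gives: well-definedness of $[\zeta]\mapsto{}_\zeta\mathds{G}$ via intertwining the twisted Galois actions by conjugation; construction of the inverse from a chosen trivialisation $\phi$ via $\zeta_\phi(\sigma)=\phi\circ\sigma(\phi)^{-1}$, using $\Aut(\Sp_{2n,\Qbar})\simeq\PSp(2n,\Qbar)$ (valid because the Dynkin diagram $C_n$ has no nontrivial symmetry, so every automorphism is inner); the cocycle and coboundary verifications; and the check that the two constructions are mutually inverse. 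You also correctly isolate the one nontrivial technical ingredient, namely effectivity of Galois descent for affine group schemes over a field, and defer it to the literature, which is appropriate for a proof sketch. One small notational slip to repair: in the last computation you write $\mathds{G}(\Qbar)$ where you mean the Galois-fixed subset, i.e.\ $\mathds{G}(F)$ sitting inside $\mathds{G}(\Qbar)$; moreover, to conclude that $\phi$ descends to an isomorphism of $F$-group schemes (not merely a bijection on $F$-points), the same verification should be run functorially over an arbitrary $F$-algebra $R$, exactly as in the paper's definition of $_\zeta\mathds{G}(R)$. Neither affects the soundness of the approach.
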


\subsection{Exact sequences}

Let $Z(M)$ be the center of $M$ and $N<Z(M)$. Then there is a long exact sequence of sets with distinguished elements
\begin{equation*}
\HH^1(A,N)\to\HH^1(A,M)\to\HH^1(A,M/N)\xrightarrow{\delta}\HH^2(A,N),
\end{equation*}
where $\HH^2(A,N)$ is usual group cohomology since $N$ is abelian. The morphism $\delta$ is called the \emph{connecting morphism} and is constructed in the following way. Let $f:A\to M/N$ a $1$-cocycle. For every $a\in A$ pick $\tilde{f}(a)\in M$ which projects to $f(a)$ in $M/N$. For every $a,b\in A$, we define $$\delta(f)_{a,b}=\tilde{f}(a)a(\tilde{f}(b))\tilde{f}(ab)^{-1}\in N.$$ It is a factor set whose class in $\HH^2(A,N)$ only depends on the class of $f$.

\subsection{The Brauer group}

The main reference for this section is \S 2.4 in Gille-Szamuely \cite{Gille_CentralSimpleAlgebrasGaloisCohomology}.
Let $k$ be a field. A \emph{central simple algebra} $A$ is a unital finite-dimensional associative $k$-algebra whose center is $k$ and which has no two-sided ideal besides $A$ and $\{0\}$. Quaternion algebras are exactly the central simple algebras of dimension 4. We say that a central simple algebra is a \emph{division algebra} if every non-zero element is invertible.

Wedderburn's Theorem \cite[Theorem 2.9.6]{Maclachlan_ArithmeticHyperbolic3Manifolds} asserts that any central simple algebra is isomorphic to $\textrm{M}_n(D)$, where $D$ is a division algebra over $k$, which is unique up to isomorphism. Let $A_1=\textrm{M}_{n_1}(D_1)$ and $A_2=\textrm{M}_{n_2}(D_2)$ be two central simple algebras and $D_1$ and $D_2$ two division algebras. We say that $A_1$ and $A_2$ are \emph{Brauer equivalent} if $D_1$ is isomorphic to $D_2$. 

The tensor product of two central simple algebras is a central simple algebra. Hence the tensor product induces a group law on Brauer equivalence classes of central simple algebras with identity element $[k]$ and where the inverse of $[A]$ is the class of the opposite algebra of $A$. This group is abelian and is called the \emph{Brauer group of $k$}, denoted by $\Br(k)$.
It has an interpretation in terms of group cohomology. Denote by $\overline{k}$ the separable closure of $k$, which is the algebraic closure of $k$ when the characteristic of $k$ is $0$.
\begin{proposition}[Theorem 4.4.7 in \cite{Gille_CentralSimpleAlgebrasGaloisCohomology}]
    The Brauer group of $k$ is isomorphic to $\HH^2(\Gal(\overline{k}/k),\overline{k}^{\times})$.
\end{proposition}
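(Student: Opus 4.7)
The plan is to identify $\Br(k)$ with a direct limit of non-abelian $\HH^1$-sets and then shift to $\HH^2(\Gal(\overline{k}/k),\overline{k}^\times)$ via the standard connecting homomorphism construction already described in Subsection~2.3; this is a prototypical instance of the classification of forms of Subsection~2.2.

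First, since every central simple $k$-algebra becomes a matrix algebra after base change to the separable closure, a central simple algebra of dimension $n^2$ is exactly a $\overline{k}/k$-form of $M_n(k)$. By the Skolem--Noether theorem, $\Aut(M_n(\overline{k})) = \PGL_n(\overline{k})$, so such algebras are classified up to isomorphism by $\HH^1(\Gal(\overline{k}/k), \PGL_n(\overline{k}))$. Next, I would apply the recipe of Subsection~2.3 to the central extension
$$1 \to \overline{k}^\times \to \GL_n(\overline{k}) \to \PGL_n(\overline{k}) \to 1,$$
producing a connecting map
$$\delta_n : \HH^1(\Gal(\overline{k}/k), \PGL_n(\overline{k})) \to \HH^2(\Gal(\overline{k}/k), \overline{k}^\times).$$
The generalized Hilbert Theorem~90 asserts $\HH^1(\Gal(\overline{k}/k), \GL_n(\overline{k})) = 0$, hence $\delta_n$ is injective by the exactness recorded in Subsection~2.3.

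To incorporate Brauer equivalence, I would observe that the embedding $\PGL_n \hookrightarrow \PGL_{nm}$ induced by $A \mapsto A \otimes M_m(k)$ is compatible with $\delta_n$ and $\delta_{nm}$. Passing to the direct limit yields an injection
$$\varinjlim_{n} \HH^1(\Gal(\overline{k}/k), \PGL_n(\overline{k})) \hookrightarrow \HH^2(\Gal(\overline{k}/k), \overline{k}^\times),$$
and Wedderburn's theorem identifies the left-hand side with $\Br(k)$ as a pointed set. Compatibility of tensor product of central simple algebras with addition of $2$-cocycles then upgrades this injection to a group homomorphism.

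The main obstacle is surjectivity: given an arbitrary class in $\HH^2(\Gal(\overline{k}/k),\overline{k}^\times)$, one must exhibit a central simple $k$-algebra realizing it. The classical device is the \emph{crossed product construction}: every such class is inflated from a $2$-cocycle $f : \Gal(L/k)^2 \to L^\times$ for some finite Galois extension $L/k$, from which one builds the $k$-algebra with $L$-basis $\{u_\sigma\}_{\sigma \in \Gal(L/k)}$ and multiplication rules $u_\sigma \cdot \lambda = \sigma(\lambda)\, u_\sigma$ and $u_\sigma u_\tau = f(\sigma,\tau)\, u_{\sigma\tau}$. Verifying that this algebra is central simple of dimension $[L:k]^2$ with $\delta$-image $[f]$, and checking that the reduction of a general continuous cocycle to one factoring through a finite Galois quotient is legitimate, is the most technical portion of the argument.
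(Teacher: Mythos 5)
The paper offers no proof of this proposition; it is quoted verbatim with a citation to Theorem~4.4.7 of Gille--Szamuely, so there is no ``paper's own argument'' to compare against. Your sketch is, in substance, the standard proof that appears in that reference and elsewhere, and the overall architecture (forms of $M_n$ classified by $\HH^1$ of $\PGL_n$ via Skolem--Noether, connecting map from the central extension, Hilbert~90, direct limit over $n$, group structure from tensor products, surjectivity by crossed products) is correct.

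One step is stated too quickly. You deduce injectivity of $\delta_n$ from ``$\HH^1(\Gal(\overline{k}/k),\GL_n(\overline{k}))=0$, hence $\delta_n$ is injective by the exactness recorded in Subsection~2.3.'' Exactness of a sequence of \emph{pointed sets} only identifies the fiber of $\delta_n$ over the distinguished element of $\HH^2$ with the image of $\HH^1(\Gal(\overline{k}/k),\GL_n(\overline{k}))$; it says nothing a priori about the other fibers, so it does not by itself give injectivity. The missing ingredient is the twisting argument: given $\alpha,\beta$ with $\delta_n(\alpha)=\delta_n(\beta)$, twist the sequence by (a lift of) $\alpha$; the twisted $\GL_n$ is $\GL_1(A_\alpha)$ for the corresponding central simple algebra $A_\alpha$, and the generalized Hilbert~90 for $A_\alpha$ then forces $\beta=\alpha$. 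This is a genuine, if standard, extra step, and without it the claim ``injective by exactness'' is not literally correct. The remaining points you flag as technical --- compatibility of the $\delta_n$ under the stabilization maps $\PGL_n\hookrightarrow\PGL_{nm}$, the crossed-product construction realizing any class in $\HH^2$, and the identification of the tensor product of algebras with the sum of $2$-cocycles --- are indeed where the remaining work lies, and your outline of them is accurate.
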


Let $F$ be a number field. Denote by $\mu_2$ the subgroup $\{\pm1\}$ of $\Qbar^{\times}$. Then $\HH^2(\Gal(\Qbar/F),\mu_2)$ is a subgroup of $\HH^2(\Gal(\Qbar/F),\Qbar^{\times})$.

\begin{proposition}[Theorem 20 Chapter X \cite{Albert_Structureofalgebras}]
\label{Propmu2brauergroup}
Under the isomorphism $\Br(F)\simeq \HH^2(\Gal(\Qbar/F),\Qbar^{\times})$, $\HH^2(\Gal(\Qbar/F),\mu_2)$ corresponds to the subgroup of $\Br(F)$ consisting of equivalence classes of quaternion algebras.
\end{proposition}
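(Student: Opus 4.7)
The plan is to identify $\HH^2(\Gal(\Qbar/F),\mu_2)$ with the $2$-torsion subgroup $\Br(F)[2]$, and then to prove that $\Br(F)[2]$ is exactly the set of quaternion algebra classes. For the first part I would apply the Kummer exact sequence $1 \to \mu_2 \to \Qbar^\times \xrightarrow{x \mapsto x^2} \Qbar^\times \to 1$ and take the associated long exact sequence in Galois cohomology:
\begin{equation*}
\HH^1(\Gal(\Qbar/F), \Qbar^\times) \to \HH^2(\Gal(\Qbar/F), \mu_2) \to \HH^2(\Gal(\Qbar/F), \Qbar^\times) \xrightarrow{\cdot 2} \HH^2(\Gal(\Qbar/F), \Qbar^\times).
\end{equation*}
By Hilbert 90 the leftmost term vanishes, so $\HH^2(\Gal(\Qbar/F),\mu_2)$ sits inside $\Br(F)$ as the kernel of multiplication by $2$, namely $\Br(F)[2]$.

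Next I would show that every quaternion algebra $A$ over $F$ satisfies $2[A] = 0$ in $\Br(F)$. The quaternionic conjugation $x \mapsto \bar x$ is an $F$-linear anti-involution, so it induces an $F$-algebra isomorphism $A \simeq A^{\mathrm{op}}$. Combined with the general identity $[A^{\mathrm{op}}] = -[A]$ in the Brauer group (coming from $A \otimes_F A^{\mathrm{op}} \simeq \mathrm{End}_F(A)$), this yields $2[A]=0$. Hence the set of quaternion algebra classes is contained in $\Br(F)[2]$.

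The main step is the reverse inclusion: every class in $\Br(F)[2]$ is represented by a quaternion algebra. By Wedderburn's Theorem, such a class is represented by a unique central division algebra $D$ over $F$ whose period divides $2$, and I need to show $\dim_F D \leq 4$, i.e.\ that the index (the degree $\sqrt{\dim_F D}$ of $D$) is at most $2$. Here I would invoke the Albert--Brauer--Hasse--Noether theorem, which guarantees that period equals index for central simple algebras over a number field. Applied to $D$, this forces $D$ to be either $F$ itself or a quaternion division algebra, so in either case $[D]$ is a quaternion algebra class, which finishes the identification.

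The hard part of this argument is the period-equals-index statement, which is the deep input from class field theory: it rests on the Hasse principle for the Brauer group of $F$ together with the explicit computation of local Brauer groups (which are $\QQ/\mathds{Z}$ at non-archimedean places and $\frac{1}{2}\mathds{Z}/\mathds{Z}$ at real places). Without it, one could imagine a $2$-torsion class represented by a larger division algebra than a quaternion algebra, and the equivalence with quaternion classes would fail; it is the arithmetic of $F$ that prevents this.
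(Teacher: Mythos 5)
Your proof is correct; the paper gives no proof of this proposition, citing Albert's \emph{Structure of Algebras} directly, so there is no internal argument to compare against. Your route is the standard modern cohomological one: (i) the Kummer sequence $1\to\mu_2\to\Qbar^\times\xrightarrow{x\mapsto x^2}\Qbar^\times\to1$ together with Hilbert~90 identifies $\HH^2(\Gal(\Qbar/F),\mu_2)$ with the $2$-torsion subgroup $\Br(F)[2]$ under the map induced by inclusion; (ii) quaternionic conjugation gives $A\simeq A^{\mathrm{op}}$, and combined with $A\otimes_F A^{\mathrm{op}}\simeq \mathrm{End}_F(A)$ this yields $2[A]=0$, so quaternion classes land in $\Br(F)[2]$; (iii) the period-equals-index theorem for number fields (Albert--Brauer--Hasse--Noether) forces the division algebra underlying any class of period dividing $2$ to have degree at most $2$, hence to be $F$ or a quaternion division algebra. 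Each step is right, and you correctly isolate step (iii) as the deep arithmetic input: over a general field the inclusion of quaternion classes into $\Br(F)[2]$ can be proper, and indeed the set of quaternion classes need not even be closed under tensor product, so the number-field hypothesis is doing real work. Albert's cited proof is phrased in the pre-cohomological language of cyclic algebras and crossed products rather than via $\HH^2$, but it rests on the same period-index fact (which Albert himself established with Hasse); your argument is a clean translation of that into the Galois-cohomology framework the present paper actually uses, which is arguably the more useful form here given how the proposition is applied in Lemma \ref{lemmapartialsurjective}.
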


\section{The diagonal embedding}
\label{sectionthediagonalembedding}

\subsection{Compatible cocycles}

Let us introduce some notations. Fix $n\geq2$ and define
\begin{equation*}
K=\begin{pmatrix}
    0&1\\
    -1&0
\end{pmatrix}.
\end{equation*} Recall that $\phi_n:\SL(2,\mathds{C})\to\Sp(2n,\mathds{C})$ is the diagonal embedding and let $K_n=\phi_n(K)$. The latter induces a symplectic form on $\mathds{C}^{2n}$ that is preserved by the image of $\phi_n$. From now on, the symplectic groups we will consider will always be the ones of the form $K_n$.

The \emph{Kronecker product} of two matrices $A=(a_{ij})\in\textrm{M}_n(\mathds{C})$ and $B\in\textrm{M}_m(\mathds{C})$ is the matrix $A\otimes B\in\textrm{M}_{mn}(\mathds{C})$ defined by
\begin{equation*}
    A\otimes B=\begin{pmatrix}
        a_{11}B&\dots&a_{1n}B\\
        \dots&\dots&\dots\\
        a_{n1}B&\dots&a_{nn}B
    \end{pmatrix}.
\end{equation*}
It satisfies $(A\otimes B)(C\otimes D)=AC\otimes BD$ and $\Det(A\otimes B)=\Det(A)^m\Det(B)^n$. Note that $\phi_n(A)=I_n\otimes A$. If $M\in\OO(I_n,\mathds{C})$ and $A\in\SL(2,\mathds{C})$, then $M\otimes A\in\Sp(2n,\mathds{C})$. 

\begin{lemma}
\label{lemmacentralizerImphin}
The centralizer of $\Imm(\phi_n)$ in $\Sp(2n,\mathds{C})$ is $\OO(I_n,\mathds{C})\otimes I_2$.    
\end{lemma}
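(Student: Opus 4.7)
The plan is to reduce the problem to a computation using Schur's lemma together with the structure of the symplectic form $K_n = I_n \otimes K$.

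First, I would identify the centralizer of $\text{Im}(\phi_n)$ in the full matrix algebra $\text{M}_{2n}(\mathds{C})$, not yet imposing the symplectic condition. The ambient representation $\mathds{C}^{2n}$ decomposes as $\mathds{C}^n \otimes \mathds{C}^2$, and $\phi_n$ acts as the trivial representation on the first factor tensored with the standard (irreducible) representation on the second factor. Since $\mathds{C}^2$ is an irreducible $\text{SL}(2,\mathds{C})$-module, Schur's lemma (together with the double commutant theorem, or a direct block computation writing $M$ as an $n \times n$ array of $2 \times 2$ blocks $M_{ij}$ and imposing $M_{ij}A = AM_{ij}$ for all $A \in \text{SL}(2,\mathds{C})$) forces every block $M_{ij}$ to be a scalar multiple of $I_2$. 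Hence the centralizer in $\text{M}_{2n}(\mathds{C})$ equals $\text{M}_n(\mathds{C}) \otimes I_2$.

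Next, I would intersect this centralizer with $\text{Sp}(2n,\mathds{C})$. Any candidate has the form $M = N \otimes I_2$ with $N \in \text{M}_n(\mathds{C})$. Using the identities $(A \otimes B)(C \otimes D) = AC \otimes BD$ and $(A \otimes B)^t = A^t \otimes B^t$, the symplectic condition $M^t K_n M = K_n$ becomes
\begin{equation*}
(N^t \otimes I_2)(I_n \otimes K)(N \otimes I_2) = (N^t N) \otimes K = I_n \otimes K.
\end{equation*}
Since $K \neq 0$, this is equivalent to $N^t N = I_n$, i.e.\ $N \in \text{O}(I_n,\mathds{C})$. Conversely, any such $N \otimes I_2$ commutes with all $I_n \otimes A = \phi_n(A)$ and lies in $\text{Sp}(2n,\mathds{C})$.

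There is no serious obstacle here: the only subtle point is the correct orientation of the tensor factors (that the $\text{SL}(2,\mathds{C})$-action lives on the second factor while the symplectic form also acts as $K$ on the second factor), which makes the computation of $M^t K_n M$ collapse cleanly to $N^t N \otimes K$. Combining the two steps yields $\{M \in \text{Sp}(2n,\mathds{C}) : M \phi_n(A) = \phi_n(A) M \ \forall A\} = \text{O}(I_n,\mathds{C}) \otimes I_2$, as claimed.
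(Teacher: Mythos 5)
Your proof is correct and follows essentially the same two-step route as the paper: first identify the centralizer of $\Imm(\phi_n)$ in $\textrm{M}_{2n}(\mathds{C})$ as $\textrm{M}_n(\mathds{C})\otimes I_2$ by a blockwise Schur-type argument, then intersect with $\Sp(2n,\mathds{C})$ using $K_n=I_n\otimes K$ to obtain $N^\top N=I_n$. You spell out the tensor computation $(N^\top N)\otimes K=I_n\otimes K$ explicitly where the paper simply asserts the equivalence, but the underlying argument is the same.
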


\begin{proof}
It is clear that $\OO(I_n,\mathds{C})\otimes I_2$ commutes with $\Imm(\phi_n)$. Conversely, if a matrix $X$ commutes with $\Imm(\phi_n)$ then each of its $2$-by-$2$ blocks have to be scalar multiple of $I_2$, i.e$.$ $X$ has to be of the form $M\otimes I_2$ for some $n$-by-$n$ matrix $M$. Then $X\in\Sp(2n,\mathds{C})$ if and only if $M\in\OO(I_n,\mathds{C})$.
\end{proof}

Let $F$ be a totally real number field and $\xi:\Gal(\Qbar/F)\to\PSL(2,\Qbar)$ a $1$-cocycle. We say that a $1$-cocycle $\zeta:\Gal(\Qbar/F)\to\PSp(2n,\Qbar)$ is \emph{$\phi_n$-compatible} with $\xi$ if
\begin{equation*}
\phi_n(\prescript{}{\xi}{\SL_2}(F))<\prescript{}{\zeta}{\Sp_{2n}}(F).
\end{equation*} For instance $I_n\otimes\xi :\Gal(\Qbar/F)\to\PSp(2n,\Qbar)$ is a $1$-cocycle $\phi_n$-compatible with $\xi$.

\begin{lemma}
\label{lemmaphicompatiblecocycle}
A $1$-cocycle $\zeta:\Gal(\Qbar/F)\to\PSp(2n,\Qbar)$ is $\phi_n$-compatible with $\xi$ if and only if $\zeta=\eta\otimes\xi$ where $\eta:\Gal(\Qbar/F)\to\PO(I_n,\Qbar)$ is a $1$-cocycle.
\end{lemma}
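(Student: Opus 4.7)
The plan is to verify the ``if'' direction by a direct Kronecker-product computation, and the ``only if'' direction by identifying the obstruction $\phi_n(\xi(\sigma))^{-1}\zeta(\sigma)$ as a cocycle with values in the centralizer of $\Imm(\phi_n)$, which by Lemma \ref{lemmacentralizerImphin} equals $\PO(I_n,\Qbar)$.

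For the easy direction, given $\zeta=\eta\otimes\xi$ with $\eta\colon\Gal(\Qbar/F)\to\PO(I_n,\Qbar)$, take $A\in\prescript{}{\xi}{\SL_2}(F)$ and compute in $\PSp(2n,\Qbar)$ using $(A\otimes B)(C\otimes D)=AC\otimes BD$:
\begin{align*}
\zeta(\sigma)\sigma(\phi_n(A))\zeta(\sigma)^{-1} &= (\eta(\sigma)\otimes\xi(\sigma))(I_n\otimes\sigma(A))(\eta(\sigma)^{-1}\otimes\xi(\sigma)^{-1}) \\
&= I_n\otimes(\xi(\sigma)\sigma(A)\xi(\sigma)^{-1}) \;=\; I_n\otimes A \;=\; \phi_n(A),
\end{align*}
so $\phi_n(A)\in\prescript{}{\zeta}{\Sp_{2n}}(F)$ and $\zeta$ is $\phi_n$-compatible with $\xi$.

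For the converse, fix $\sigma$ and set $c(\sigma):=\phi_n(\xi(\sigma))^{-1}\zeta(\sigma)\in\PSp(2n,\Qbar)$. The compatibility condition reads $\zeta(\sigma)\phi_n(\sigma(A))\zeta(\sigma)^{-1}=\phi_n(A)$ for every $A\in\prescript{}{\xi}{\SL_2}(F)$; rewriting $\phi_n(A)=\phi_n(\xi(\sigma))\phi_n(\sigma(A))\phi_n(\xi(\sigma))^{-1}$ using that $\phi_n$ descends to a morphism $\PSL_2\to\PSp_{2n}$, this says that $c(\sigma)$ centralizes $\phi_n(\sigma(A))$ for all such $A$. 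Since $\prescript{}{\xi}{\SL_2}(F)$ is Zariski-dense in $\SL_2(\Qbar)$ (as $F$ is a number field and $\prescript{}{\xi}{\SL_2}$ is a connected semisimple $F$-group) and density is preserved by the conjugation $\sigma(A)=\xi(\sigma)^{-1}A\xi(\sigma)$, the elements $\phi_n(\sigma(A))$ are Zariski-dense in $\Imm(\phi_n)$. Hence $c(\sigma)$ centralizes $\Imm(\phi_n)$ in $\PSp(2n,\Qbar)$. Lifting Lemma \ref{lemmacentralizerImphin} to $\PSp$ via the connectedness of $\Imm(\phi_n)$ (which forces the sign in $ghg^{-1}=\pm h$ to be constantly $+1$), this centralizer is precisely the image of $\PO(I_n,\Qbar)$ under $[M]\mapsto[M\otimes I_2]$. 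Thus $c(\sigma)=\eta(\sigma)\otimes I_2$ for a unique $\eta(\sigma)\in\PO(I_n,\Qbar)$, and multiplying back by $\phi_n(\xi(\sigma))=I_n\otimes\xi(\sigma)$ yields $\zeta(\sigma)=\eta(\sigma)\otimes\xi(\sigma)$.

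Continuity of $\eta$ is inherited from that of $\zeta,\xi,\phi_n$ together with continuity of the projection $M\otimes I_2\mapsto M$. To verify the cocycle relation, compare the two ways of writing $\zeta(\sigma\tau)$: on the one hand $\eta(\sigma\tau)\otimes\xi(\sigma\tau)$, on the other $\eta(\sigma)\sigma(\eta(\tau))\otimes\xi(\sigma)\sigma(\xi(\tau))=\eta(\sigma)\sigma(\eta(\tau))\otimes\xi(\sigma\tau)$. The Kronecker-product map $\PO(I_n,\Qbar)\times\PSL(2,\Qbar)\to\PSp(2n,\Qbar)$ is injective (if $M\otimes A=\pm I_{2n}$ then the off-diagonal blocks $m_{ij}A$ vanish and the diagonal ones are scalar, forcing $A=\pm I_2$ and $M=\pm I_n$), so left-cancellation gives $\eta(\sigma\tau)=\eta(\sigma)\sigma(\eta(\tau))$. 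The main obstacle is the reduction from the $F$-point statement to a per-$\sigma$ algebraic identity: it requires both Zariski-density of $\prescript{}{\xi}{\SL_2}(F)$ and a careful transfer of the centralizer from $\Sp$ to $\PSp$ using connectedness of $\Imm(\phi_n)$.
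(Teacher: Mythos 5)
Your proof is correct and follows the same strategy as the paper: identify the discrepancy $c(\sigma)=\phi_n(\xi(\sigma))^{-1}\zeta(\sigma)$ as an element centralizing $\Imm(\phi_n)$ via Zariski density of $\prescript{}{\xi}{\SL_2}(F)$, then invoke Lemma~\ref{lemmacentralizerImphin} to write it as $\eta(\sigma)\otimes I_2$. You additionally spell out several steps the paper leaves implicit --- transferring the centralizer computation from $\Sp$ to $\PSp$ via connectedness of $\Imm(\phi_n)$, noting that Zariski density is preserved under the twisted Galois action $A\mapsto\xi(\sigma)^{-1}A\xi(\sigma)$, and verifying the cocycle relation for $\eta$ via injectivity of the Kronecker-product map --- which is a sound and slightly more rigorous rendering of the same argument.
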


\begin{proof}
Let $\zeta:\Gal(\Qbar/F)\to\PSp(2n,\Qbar)$ be a $1$-cocycle $\phi_n$-compatible with $\xi$. Fix $\sigma\in\Gal(\Qbar/F)$. For all $A\in\prescript{}{\xi}{\SL_2}(F)$
\begin{equation*}
\zeta(\sigma)\sigma(\phi_n(A))\zeta(\sigma)^{-1}=\phi_n(\xi(\sigma)\sigma(A)\xi(\sigma)^{-1}),
\end{equation*} which implies that $\phi_n(\xi(\sigma))^{-1}\zeta(\sigma)$ commutes with $\phi_n(\prescript{}{\xi}{\SL_2}(F))$. The latter is Zariski-dense in $\Imm(\phi_n)$. By Lemma \ref{lemmacentralizerImphin}, for all $\sigma\in\Gal(\Qbar/F)$ there exists $\eta(\sigma)\in\PO(I_n,\Qbar)$ such that $$\zeta(\sigma)=\phi_n(\xi(\sigma))(\eta(\sigma)\otimes I_2 )=\eta(\sigma)\otimes\xi(\sigma).$$ Since $\zeta$ is a $1$-cocycle, $\eta:\Gal(\Qbar/F)\to\PO(I_n,\Qbar)$ is also a $1$-cocycle. Conversely for any such $1$-cocycle $\eta$, $\eta\otimes\xi$ is $\phi_n$-compatible with $\xi$.
\end{proof}

\subsection{Forms of $\textrm{Sp}(2n,\mathds{R})$}

The goal of this section is to compute the forms associated to compatible cocycles. We prove here Proposition \ref{propclassificationlatticephi}.

Consider the short exact sequence of groups
\begin{equation*}
\label{equationpi}
    1\to\mu_2\to\OO(n,\Qbar)\rightarrow\PO(n,\Qbar)\to1.
\end{equation*} It induces a connecting map
\begin{equation*}
\label{equationpartialn}
\HH^1(\Gal(\Qbar/F),\PO(n,\Qbar))\xrightarrow[]{\partial_n}\HH^2(\Gal(\Qbar/F),\mu_2).
\end{equation*}

\begin{lemma}
\label{lemmapartialsurjective}
If $n$ is even, $\partial_{n}$ is surjective. If $n$ is odd, $\partial_{n}$ is trivial.
\end{lemma}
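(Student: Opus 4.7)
The two cases call for opposite strategies: a splitting for $n$ odd, and an existence argument for $n$ even.

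For $n$ odd, I exhibit a splitting of $1 \to \mu_2 \to \OO(n,\Qbar) \to \PO(n,\Qbar) \to 1$, from which the triviality of $\partial_n$ follows. Since $\det(-I_n) = (-1)^n = -1$ when $n$ is odd, the determinant restricts to the identity on $\mu_2 = \{\pm I_n\}$, so $\OO(n,\Qbar) = \SO(n,\Qbar) \times \mu_2$ as algebraic groups compatibly with the Galois action, and $\SO(n,\Qbar) \hookrightarrow \OO(n,\Qbar)$ is a Galois-equivariant section of the projection onto $\PO(n,\Qbar) \cong \SO(n,\Qbar)$. The existence of such a section forces the connecting map to vanish.

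For $n$ even, I reduce surjectivity of $\partial_n$ to the case $n=2$ via the diagonal tensor embedding
\begin{equation*}
i\colon \OO(2,\Qbar) \hookrightarrow \OO(n,\Qbar),\qquad A\mapsto A\otimes I_{n/2}.
\end{equation*}
Since $n$ is even, $-I_2 \otimes I_{n/2} = -I_n$, so $i$ identifies the two $\mu_2$-centers and fits into a morphism of short exact sequences inducing a map $\bar i\colon \PO(2,\Qbar) \to \PO(n,\Qbar)$ of the respective quotients. Functoriality of the connecting map gives $\partial_n \circ \bar i_\ast = \partial_2$, so $\Imm(\partial_2) \subseteq \Imm(\partial_n)$, and it suffices to prove $\partial_2$ is surjective.

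To show $\partial_2$ surjects onto $\HH^2(\Gal(\Qbar/F),\mu_2)$, which by Proposition~\ref{Propmu2brauergroup} consists of quaternion algebra classes, I construct for each quaternion algebra $(a,b)_F$ an explicit $1$-cocycle $\eta\colon \Gal(\Qbar/F) \to \PO(2,\Qbar)$ whose coboundary represents $[(a,b)_F]$. In a hyperbolic basis, $\OO(2,\Qbar) \cong \Qbar^\times \rtimes \mathds{Z}/2$ with $\mathds{Z}/2$ acting on $\Qbar^\times$ by inversion, so $\PO(2,\Qbar) \cong (\Qbar^\times/\mu_2) \rtimes \mathds{Z}/2$. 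I parametrize $\eta$ with outer component the quadratic character $\chi_a$ of $F(\sqrt{a})/F$ and inner component encoding $\sqrt{b}$ as a twisted $\chi_a$-cocycle; lifting by a square root in $\OO(2,\Qbar)$ and computing the resulting $\mu_2$-valued coboundary produces a $2$-cocycle representing $[(a,b)_F]$. The principal obstacle is this last step, which requires careful bookkeeping of the square-root lift through the twisted semidirect-product structure; an alternative would be to invoke the classification of $\PO(2)$-torsors as pairs consisting of a degree-$2$ central simple algebra with an orthogonal involution, under which $\partial_2$ is given by taking the Brauer class.
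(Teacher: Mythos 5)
Your $n$-odd argument is correct and gives a cleaner conceptual packaging of the paper's: for odd $n$ the sequence $1\to\mu_2\to\OO(n)\to\PO(n)\to1$ splits via $\SO(n)\hookrightarrow\OO(n)$ because $\det(-I_n)=-1$, so $\partial_n$ vanishes. The paper proves the same fact by taking $f(\sigma)=\Det(M_\sigma)$ for lifts $M_\sigma$ and exhibiting $\partial_n(\eta)$ as the coboundary of $f$ (using $(\pm1)^n=\pm1$); this is the same splitting in coboundary form. For $n$ even, your reduction to $n=2$ via the tensor embedding $A\mapsto A\otimes I_{n/2}$ and functoriality of $\partial$ matches the paper, which uses the conjugate embedding $\phi_{n/2}\colon A\mapsto I_{n/2}\otimes A$.

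The gap is in the $n=2$ step. You parametrize a candidate cocycle through $\OO(2,\Qbar)\cong\Qbar^\times\rtimes\mathds{Z}/2$ but never actually construct it, and you flag the coboundary computation as the ``principal obstacle.'' There is an extra wrinkle you should be aware of: the hyperbolic-basis isomorphism is defined only over $\Qbar$, so the Galois action on $\Qbar^\times\rtimes\mathds{Z}/2$ is not the coordinatewise one but is twisted by the base change, which must be tracked on top of the square-root bookkeeping. The paper's proof supplies exactly the object you are missing: the cocycle $T^{a,b}$, already written out before the lemma (values $I_2$, $\Diag(1,-1)$, and the two antidiagonal matrices according to how $\sigma$ acts on $\sqrt a$ and $\sqrt b$), is an explicit $1$-cocycle valued in $\PO(2,\Qbar)$ in the standard orthonormal coordinates, and the paper has already computed that its twisted matrix algebra is $(a,b)_F$; that $\partial_2(T^{a,b})=[(a,b)_F]$ is then a short direct verification. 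Your fallback via the classification of $\PO_2$-torsors as degree-$2$ central simple algebras with orthogonal involution, with $\partial_2$ the Brauer class, is a valid citation-based alternative (one also needs the easy fact that every quaternion algebra admits an orthogonal involution); but the primary route you propose is not finished and would need a concrete cocycle along the lines of $T^{a,b}$.
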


Before proving the lemma, we introduce some notation. Let $a,b\in F^{\times}$ and define
\begin{align*}
    T^{a,b}:&\Gal(\Qbar/F)\to\PSL(2,\Qbar)\\
    &\sigma\mapsto\left\{\begin{array}{ll}
        I_2 & \mbox{if $\sigma(\sqrt{a})=\sqrt{a}$ and $\sigma(\sqrt{b})=\sqrt{b}$} \\
        \begin{pmatrix}
            1&0\\
            0&$-$1
        \end{pmatrix} & \mbox{if $\sigma(\sqrt{a})=\sqrt{a}$ and $\sigma(\sqrt{b})=-\sqrt{b}$}\\
        \begin{pmatrix}
            0&1\\
            1&0
        \end{pmatrix} & \mbox{if $\sigma(\sqrt{a})=-\sqrt{a}$ and $\sigma(\sqrt{b})=\sqrt{b}$}\\
        \begin{pmatrix}
            0&1\\
            $-$1&0
        \end{pmatrix} & \mbox{if $\sigma(\sqrt{a})=-\sqrt{a}$ and $\sigma(\sqrt{b})=-\sqrt{b}$.}
    \end{array}\right.
\end{align*}
We often write $T_{\sigma}$ or $T_{\sigma}^{a,b}$ instead of $T^{a,b}(\sigma)$. It is a $1$-cocycle. Explicit computations show that $$\prescript{}{T^{a,b}}{\textrm{M}_2}(\Qbar):=\{M\in\textrm{M}_2(\Qbar)\mid T^{a,b}_{\sigma}\sigma(M) (T^{a,b}_{\sigma})^{-1}=M,\ \forall\sigma\in\Gal(\Qbar/F)\}$$ is the quaternion algebra
\begin{equation*}
    \left\{\begin{pmatrix}
x_0+\sqrt{a}x_1&\sqrt{b}x_2+\sqrt{ab}x_3\\
        \sqrt{b}x_2-\sqrt{ab}x_3&x_0-\sqrt{a}x_1
    \end{pmatrix}\ \big|\ x_i\in F\right\}\simeq(a,b)_F.
\end{equation*}

\begin{proof}[Proof of Lemma \ref{lemmapartialsurjective}]
Suppose that $n$ is even. Let $a,b\in F^{\times}$ and $$\chi_n:=\phi_{\frac{n}{2}}(T^{a,b}):\Gal(\Qbar/F)\to\PO(n,\Qbar).$$ For all $\sigma\in\Gal(\Qbar/F)$ pick a lift $M_{\sigma}\in\OO(n,\Qbar)$ of $\chi_n(\sigma)$. By construction of the connecting map, for all $s,t\in\Gal(\Qbar/F)$, $\partial_n(\chi_n)_{st}I_n=M_ss(M_t)M_{st}^{-1}$. Then for all $s,t$
$$\partial_n(\chi_n)_{st}=\partial_2(\chi_2)_{st}.$$
 Explicit computations show that $\partial_2(\chi_2)$ corresponds to $(a,b)_F$ under the embedding $\HH^2(\Gal(\Qbar/F),\mu_2)\hookrightarrow\Br(F)$. Since this is true for every $a,b\in F^{\times}$, $\partial_2$ and hence $\partial_n$ is surjective.

Suppose that $n$ is odd. Let $\eta:\Gal(\Qbar/F)\to\PO(n,\Qbar)$ be a $1$-cocycle. For all $\sigma\in\Gal(\Qbar/F)$ pick a lift $M_{\sigma}\in\OO(n,\Qbar)$ of $\eta(\sigma)$. By construction of $\partial_n$, for all $s,t\in\Gal(\Qbar/F)$ $$\partial_n(\eta)_{st}I_n=M_ss(M_t)M_{st}^{-1}\in\mu_2.$$ Define $f:\Gal(\Qbar/F)\to\Qbar$, $\sigma\mapsto\Det(M_\sigma)$. Then for all $s,t\in\Gal(\Qbar/F)$ $$\partial_n(\eta)_{st}=f(s)s(f(t))f(st)^{-1}$$ which implies that $\partial_n(\eta)$ is trivial.
\end{proof}

Consider the short exact sequence of groups
\begin{equation*}
1\to\mu_2\to\Sp(2n,\Qbar)\to\PSp(2n,\Qbar)\to1
\end{equation*}where $\mu_2$ is the group $\{\pm I_{2n}\}$. It induces a long exact sequence of sets with distinguished elements
\begin{equation*}
1\to\HH^1(\Gal(\Qbar/F),\PSp(2n,\Qbar))\xrightarrow[]{\delta_{2n}}\HH^2(\Gal(\Qbar/F),\mu_2),
\end{equation*} see \S1.3.2 and Proposition 2.7 in \cite{Platonov_AlgebraicgroupsNumbertheory}. By Theorem 6.20 in \cite{Platonov_AlgebraicgroupsNumbertheory}, $\delta_{2n}$ is surjective.

\begin{lemma}
\label{lemmaproductfactorset}
Let $\eta:\Gal(\Qbar/F)\to\PO(n,\Qbar)$ and $\xi:\Gal(\Qbar/F)\to\PSL(2,\Qbar)$ be two $1$-cocycles. Then
\begin{equation*}
\delta_{2n}(\eta\otimes\xi)=\delta_2(\xi)\partial_n(\eta).
\end{equation*}
\end{lemma}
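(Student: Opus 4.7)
The plan is to prove the identity by computing $\delta_{2n}(\eta\otimes\xi)$ directly from its definition using \emph{Kronecker products of lifts}, and then factoring the resulting cocycle.

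First I would pick, for every $\sigma\in\Gal(\Qbar/F)$, arbitrary lifts $M_\sigma\in\OO(n,\Qbar)$ of $\eta(\sigma)$ and $A_\sigma\in\SL(2,\Qbar)$ of $\xi(\sigma)$. The key observation is that $M_\sigma\otimes A_\sigma$ is a lift of $(\eta\otimes\xi)(\sigma)$ inside $\Sp(2n,\Qbar)$. Indeed, using the standard identities for Kronecker products recalled before Lemma \ref{lemmacentralizerImphin},
\begin{equation*}
(M_\sigma\otimes A_\sigma)^t K_n (M_\sigma\otimes A_\sigma)=(M_\sigma^tM_\sigma)\otimes(A_\sigma^tKA_\sigma)=I_n\otimes K=K_n,
\end{equation*}
and $\Det(M_\sigma\otimes A_\sigma)=\Det(M_\sigma)^2\Det(A_\sigma)^n=1$, so this element really lies in $\Sp(2n,\Qbar)$.

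Next I would unfold the definition of the connecting map. For $s,t\in\Gal(\Qbar/F)$,
\begin{equation*}
\delta_{2n}(\eta\otimes\xi)_{s,t}\,I_{2n}=(M_s\otimes A_s)\,\bigl(s(M_t)\otimes s(A_t)\bigr)\,(M_{st}\otimes A_{st})^{-1},
\end{equation*}
and applying $(X_1\otimes Y_1)(X_2\otimes Y_2)=X_1X_2\otimes Y_1Y_2$ twice gives
\begin{equation*}
\delta_{2n}(\eta\otimes\xi)_{s,t}\,I_{2n}=\bigl(M_ss(M_t)M_{st}^{-1}\bigr)\otimes\bigl(A_ss(A_t)A_{st}^{-1}\bigr).
\end{equation*}
By construction of $\partial_n$ and $\delta_2$, the two factors are $\partial_n(\eta)_{s,t}I_n$ and $\delta_2(\xi)_{s,t}I_2$ respectively. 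Since these scalars lie in $\{\pm1\}$, the Kronecker product equals $\partial_n(\eta)_{s,t}\,\delta_2(\xi)_{s,t}\,I_{2n}$, yielding the required equality of factor sets.

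The argument is essentially formal: the only thing to verify carefully is that tensoring an orthogonal lift with a symplectic lift genuinely produces a symplectic lift, which is why I isolated that as the first step. The main potential obstacle is notational rather than mathematical, namely making sure that the three connecting maps $\partial_n$, $\delta_2$ and $\delta_{2n}$ are computed with respect to compatible choices of lifts; choosing the lifts in tensor-product form makes this automatic. Independence of the cohomology class from the chosen lifts is the standard property of the connecting map recalled at the start of Section~\ref{subsectionGaloiscohomology} (the exact sequences subsection), so no further verification is needed.
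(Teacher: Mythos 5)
Your argument is essentially identical to the paper's: choose orthogonal and symplectic lifts $M_\sigma$ and $A_\sigma$, observe that $M_\sigma\otimes A_\sigma$ lifts $(\eta\otimes\xi)(\sigma)$, and use the mixed-product property of Kronecker products to split the factor set. The only difference is that you explicitly verify the lift lands in $\Sp(2n,\Qbar)$, a fact the paper records earlier (just before Lemma~\ref{lemmacentralizerImphin}) and uses silently; this is a harmless extra check, not a divergence.
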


\begin{proof}
    For each $\sigma\in\Gal(\Qbar/F)$ pick $M_{\sigma}\in\OO(n,\Qbar)$ and $A_{\sigma}\in\SL(2,\Qbar)$ such that the projectivization of $M_{\sigma}$ and $A_{\sigma}$ are $\eta(\sigma)$ and $\xi(\sigma)$ respectively. Then $M_\sigma\otimes A_{\sigma}$ has for projectivization $\eta(\sigma)\otimes\xi(\sigma)$. For all $s,t\in\Gal(\Qbar/F)$
    \begin{align*}
        \delta_{2n}(\eta\otimes\xi)_{st}I_{2n}&=(M_s\otimes A_s)s(M_t\otimes A_t)(M_{st}\otimes A_{st})^{-1}\\
        &=(M_ss(M_t)M_{st}^{-1})\otimes (A_ss(A_t)A_{st}^{-1})\\
        &=\partial_n(\eta)_{st}I_n\otimes\delta_2(\xi)_{st}I_2
    \end{align*}
\end{proof}

\begin{proposition}
\label{propclassificationlatticephi}
Let $\Gamma$ be an arithmetic lattice in $\SL(2,\mathds{R})$ which is commensurable with the elements of norm 1 in an order $\mathcal{O}_{\Gamma}$ of a quaternion algebra over a totally real number field $F$.

If $n$ is even, $\phi_n(\Gamma)$ lies in lattice of $\Sp(2n,\mathds{R})$ widely commensurable with $\SU(I_n,\overline{\phantom{s}};\mathcal{O})$, for any order $\mathcal{O}$ in any quaternion algebra over $F$ that splits at exactly one place of $F$. Furthermore, up to wide commensurability, these are the only lattices of $\Sp(2n,\mathds{R})$ that contain $\phi_n(\Gamma)$.

If $n$ is odd, $\phi_n(\Gamma)$ lies in a lattice of $\Sp(2n,\mathds{R})$ widely commensurable with $\SU(I_n,\overline{\phantom{s}};\mathcal{O}_{\Gamma})$. Furthermore, up to wide commensurability, this is the only lattice of $\Sp(2n,\mathds{R})$ that contains $\phi_n(\Gamma)$.
\end{proposition}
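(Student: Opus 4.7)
My plan is to translate containment of $\phi_n(\Gamma)$ inside a lattice of $\Sp(2n,\R)$ into an identity of Galois cohomology classes, then feed in the three ingredients prepared above: Lemma \ref{lemmaphicompatiblecocycle}, Lemma \ref{lemmaproductfactorset} and Lemma \ref{lemmapartialsurjective}. I would first fix a $1$-cocycle $\xi:\Gal(\Qbar/F)\to\PSL(2,\Qbar)$ representing the arithmetic class of $\Gamma$, so that (up to finite index) $\Gamma\subset\prescript{}{\xi}{\SL_2}(F)$ and, via Proposition \ref{Propmu2brauergroup}, the class $\delta_2(\xi)\in\HH^2(\Gal(\Qbar/F),\mu_2)$ corresponds to the Brauer class of the quaternion algebra $A_\Gamma\supset\mathcal{O}_\Gamma$. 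By Proposition \ref{propclassifactionlattice}, every wide commensurability class of lattice in $\Sp(2n,\R)$ is realised by the $F$-points of some form $\prescript{}{\zeta}{\Sp_{2n}}$, and the containment $\phi_n(\Gamma)\subset\prescript{}{\zeta}{\Sp_{2n}}(F)$ is precisely the condition that $\zeta$ be $\phi_n$-compatible with $\xi$; Lemma \ref{lemmaphicompatiblecocycle} then writes $\zeta=\eta\otimes\xi$ for a cocycle $\eta:\Gal(\Qbar/F)\to\PO(n,\Qbar)$.

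Next I would apply Lemma \ref{lemmaproductfactorset} to obtain
\[
\delta_{2n}(\zeta)=\delta_2(\xi)\cdot\partial_n(\eta).
\]
The injectivity of $\delta_{2n}$ recalled in the excerpt, together with Proposition \ref{Propmu2brauergroup}, turns this equality into an identity in the Brauer group of $F$: it identifies the class $[A]$ of the quaternion algebra associated to the form as $[A_\Gamma]\cdot\partial_n(\eta)$. The two cases now fall out of Lemma \ref{lemmapartialsurjective}. When $n$ is even, $\partial_n$ is surjective, so $\delta_{2n}(\eta\otimes\xi)$ ranges over all of $\HH^2(\Gal(\Qbar/F),\mu_2)$: every Brauer class of quaternion algebra over $F$ is attained, giving the asserted ``any quaternion algebra over $F$''. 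When $n$ is odd, $\partial_n$ is trivial, so $\delta_{2n}(\zeta)=\delta_2(\xi)$, which forces $A\simeq A_\Gamma$ and pins down a unique wide commensurability class. In both cases the tautological choice $\eta=1$, yielding $\zeta=I_n\otimes\xi$, exhibits an explicit lattice containing $\phi_n(\Gamma)$.

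The point I expect to be most delicate is the bookkeeping at the real places of $F$. For $\prescript{}{\zeta}{\Sp_{2n}}$ to yield a lattice of $\Sp(2n,\R)$ rather than of a real form with additional compact factors, $A$ must split at exactly one real place of $F$, and that place must coincide with the real embedding $\sigma_0$ at which $A_\Gamma$ splits, so that $\prescript{}{\zeta}{\Sp_{2n}}(F)\otimes_{F,\sigma_0}\R$ actually contains $\phi_n(\Gamma)\subset\Sp(2n,\R)^{\sigma_0}$. In the odd case this is automatic from $[A]=[A_\Gamma]$; in the even case one has to verify that the cocycle $\eta$ realising a prescribed Brauer class can also be chosen with the correct local behaviour at every real place of $F$, which is what matches the condition ``splits at exactly one place of $F$'' in the statement.
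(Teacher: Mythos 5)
Your proposal follows essentially the same route as the paper: translate containment into $\phi_n$-compatibility of cocycles (Lemma \ref{lemmaphicompatiblecocycle}), factor the obstruction class via Lemma \ref{lemmaproductfactorset}, and read off the parity dichotomy from Lemma \ref{lemmapartialsurjective}. Where you invoke injectivity of $\delta_{2n}$ to identify wide commensurability classes with Brauer classes, the paper instead constructs an explicit conjugating matrix $S$ between the twisted forms; these are equivalent uses of the same input (Proposition 2.7 in Platonov-Rapinchuk), so that difference is cosmetic. Your closing worry about the real places is also handled correctly, since the statement already restricts to quaternion algebras split at exactly one real place and the odd case is forced by the Brauer identity.

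The one step you gloss over that the paper has to argue is the converse direction. You write that containment of $\phi_n(\Gamma)$ in a lattice $\Lambda$ ``is precisely the condition that $\zeta$ be $\phi_n$-compatible with $\xi$,'' but $\phi_n$-compatibility is defined by the stronger statement $\phi_n(\prescript{}{\xi}{\SL_2}(F)) < \prescript{}{\zeta}{\Sp_{2n}}(F)$, whereas a priori one only knows that a finite-index subgroup of $\Gamma$ (which is not all of $\prescript{}{\xi}{\SL_2}(F)$) lands in $\prescript{}{\zeta}{\Sp_{2n}}(F)$ after conjugation. Bridging this requires the Zariski-density argument: for each $\sigma\in\Gal(\Qbar/F)$ one builds the twisted morphism $\phi_n^{\sigma}$ and argues that it agrees with $\phi_n$ because they coincide on a Zariski-dense subgroup, and only then does one obtain $\phi_n$-compatibility and the decomposition $\zeta=\eta\otimes\xi$. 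Similarly, your identification of which quaternion algebra the class $\delta_{2n}(\zeta)$ corresponds to relies on the explicit computation that $\prescript{}{I_n\otimes T^{a,b}}{\Sp_{2n}(\mathcal{O}_F)}$ is widely commensurable with $\SU(I_n,\overline{\phantom{s}};\mathcal{O}_{\Gamma})$, together with the passage from $F$-points to $\mathcal{O}_F$-points (Milne's Proposition 5.2 in Appendix A), neither of which appears in your sketch. These are fillable gaps, but they carry real content and should be stated.
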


Proposition \ref{propositionclassificationdiagonal} follows from Proposition \ref{propclassificationlatticephi} together with \cite[Proposition 1.6]{Audibert_ZariskidenseHitchinrepresentationsuniformlattices}.

\begin{proof}[Proof of Proposition \ref{propclassificationlatticephi}]
Fix $a,b\in F^{\times}$ such that $\mathcal{O}_{\Gamma}$ is an order of $(a,b)_F$.
The $1$-cocycle $I_n\otimes T^{a,b}:\Gal(\Qbar/F)\to\PSp(2n,\Qbar)$ is $\phi_n$-compatible with $T^{a,b}$. Hence $\phi_n(\prescript{}{T^{a,b}}{\SL_2(F)})<\prescript{}{I_n\otimes T^{a,b}}{\Sp_{2n}(F)}$. By Proposition 5.2 in Appendix A of \cite{Milne_LieAlgebrasAlgebraicGroupsLieGroups}\footnote{Proposition 5.2 is stated for $F=\mathds{Q}$ but works for any number field, as can be seen using restriction of scalars (see \S10.3 in \cite{Maclachlan_ArithmeticHyperbolic3Manifolds}).} $\phi_n(\Gamma)<\prescript{}{I_n\otimes T^{a,b}}{\Sp_{2n}(\mathcal{O}_F)}$ up to finite index. We now describe the group $\prescript{}{I_n\otimes T^{a,b}}{\Sp_{2n}(\mathcal{O}_F)}$. For any $M\in\SL(2n,\Qbar)$
\begin{align*}
    &M\in\prescript{}{I_n\otimes T^{a,b}}{\Sp_{2n}(F)}\\
    \Leftrightarrow&\left\{\begin{array}{ll}
        M\in\textrm{M}_n(\prescript{}{T^{a,b}}{\textrm{M}_2(F)})\ \textrm{and}\\
        M^\top K_nM=K_n
    \end{array}\right.\\
    \Leftrightarrow&\left\{\begin{array}{ll}
        M\in\textrm{M}_n(\prescript{}{T^{a,b}}{\textrm{M}_2(F)})\ \textrm{and}\\
        \overline{M}^tM=I_{2n}
    \end{array}\right.
\end{align*}
since $\overline{M}^t=(K_nMK_n^{-1})^\top$, where $\overline{\phantom{s}}$ is the conjugation on $\prescript{}{T^{a,b}}{\textrm{M}_2(F)}$ and $M^t$ is the transpose of $M$ viewed as an $n$-by-$n$ matrix. Hence $\prescript{}{I_n\otimes T^{a,b}}{\Sp_{2n}}(\mathcal{O}_F)$ is widely commensurable with $\SU(I_n,\overline{\phantom{s}};\mathcal{O}_{\Gamma})$.

Suppose that $n$ is even. Let $(c,d)_F$ be a quaternion algebra over $F$. Let $(a_{st})_{st}\in\HH^2(\Gal(\Qbar/F),\mu_2)$ be a $2$-cocycle corresponding to $(c,d)_F$, see Proposition \ref{Propmu2brauergroup}. Since 
\begin{equation*}
\HH^1(\Gal(\Qbar/F),\PO(n,\Qbar))\xrightarrow[]{\partial_n}\HH^2(\Gal(\Qbar/F),\mu_2)
\end{equation*} is surjective, see Lemma \ref{lemmapartialsurjective}, there exists a $1$-cocycle $\eta:\Gal(\Qbar/F)\to\PO(n,\Qbar)$ such that $\partial_n(\eta)=\delta_2(T^{a,b})^{-1}(a_{st})$. By Lemma \ref{lemmaphicompatiblecocycle}, $\eta\otimes T^{a,b}$ is $\phi_n$-compatible with $T^{a,b}$. Hence Proposition 5.2 in Appendix A of \cite{Milne_LieAlgebrasAlgebraicGroupsLieGroups} shows that $\phi(\Gamma)<\prescript{}{\eta\otimes T^{a,b}}{\Sp_{2n}(\mathcal{O}_F)}$ up to finite index. We now describe the group $\prescript{}{\eta\otimes T^{a,b}}{\Sp_{2n}(\mathcal{O}_F)}$.

Consider the map
\begin{align*}
    \zeta:\Gal(\Qbar/F)&\to\prescript{}{I_n\otimes T^{c,d}}{\PSp_{2n}(\Qbar)}\\
\sigma&\mapsto(\eta(\sigma)\otimes T_{\sigma}^{a,b})(I_n\otimes T^{c,d}_{\sigma})^{-1}.
\end{align*} We endow $\prescript{}{I_n\otimes T^{c,d}}{\PSp_{2n}(\Qbar)}$ with the action of $\Gal(\Qbar/F)$ defined by $\sigma\cdot M:=(I_n\otimes T^{c,d}_{\sigma})\sigma(M)(I_n\otimes T^{c,d}_{\sigma})^{-1}.$
The map $\zeta$ is a $1$-cocycle with respect to this action. By Lemma \ref{lemmaproductfactorset}, $\delta_{2n}(\eta\otimes T^{a,b})=(a_{st})$, so the image of $\zeta$ in $\HH^2(\Gal(\Qbar/F),\mu_2)$ is trivial. Hence it lifts to a $1$-cocycle in $\prescript{}{I_n\otimes T^{c,d}}{\Sp_{2n}(\Qbar)}$. It is thus trivial, see Proposition 2.7 in \cite{Platonov_AlgebraicgroupsNumbertheory}. Hence there exists $S\in\Sp(2n,\Qbar)$ such that for all $\sigma\in\Gal(\Qbar/F)$
$$\eta(\sigma)\otimes T^{a,b}_{\sigma}=S^{-1}(I_n\otimes T^{c,d}_{\sigma})\sigma(S).$$
It follows that $M\in\prescript{}{\eta\otimes T^{a,b}}{\Sp_{2n}(F)}$ if and only if
\begin{align*}
    &\left\{\begin{array}{ll}
       SMS^{-1}\in\textrm{M}_n(\prescript{}{T^{c,d}}{\textrm{M}_2(F)})\  \textrm{and}\\
        (SMS^{-1})^\top S^{-\top}K_nS^{-1}(SMS^{-1})=S^{-\top}K_nS^{-1}
    \end{array}\right.\\
    \Leftrightarrow&\ SMS^{-1}\in\prescript{}{I_n\otimes T^{c,d}}{\Sp_{2n}(F)}
\end{align*} since $S^\top K_nS=K_n$. Finally, $\prescript{}{\eta\otimes T^{a,b}}{\Sp_{2n}(\mathcal{O}_F)}$ is widely commensurable with $\prescript{}{I_n\otimes T^{c,d}}{\Sp_{2n}(\mathcal{O}_F)}$ which is widely commensurable with $\SU(I_n,\overline{\phantom{s}};\mathcal{O})$ for $\mathcal{O}$ an order of $(c,d)_F$.

Conversely, suppose that $\Lambda$ is an arithmetic subgroup of $\Sp(2n,\mathds{R})$ that contains $\phi_n(\Gamma)$. Since $\Sp(2n,\mathds{R})$ is simple, $\Lambda$ is widely commensurable with $\prescript{}{\zeta}{\Sp_{2n}}(\mathcal{O}_L)$ for $L$ a number field and $\zeta:\Gal(\Qbar/L)\to\PSp_{2n}(\overline{\mathds{Q}})$ a $1$-cocycle \cite[Corollary 5.5.16]{Morris_IntroductionArithmeticGroups}. By Proposition 1.6 in \cite{Audibert_ZariskidenseHitchinrepresentationsuniformlattices} we can assume that $L=F$.
We show that $\zeta$ is $\phi_n$-compatible with $T^{a,b}$. For every $\sigma\in\Gal(\Qbar/F)$ denote by
\begin{align*}
    \phi_n^{\sigma}:\SL_2(\Qbar)&\to\Sp(2n,\Qbar)\\ &g\mapsto\zeta(\sigma)(\sigma\circ\phi_n((T^{a,b}_{\sigma})^{-1}\sigma^{-1}(g)T^{a,b}_{\sigma}))\zeta(\sigma)^{-1}.
\end{align*}
This is an algebraic morphism that coincides with $\phi_n$ on a finite-index subgroup of $\Gamma$. Since any finite-index subgroup of $\Gamma$ is Zariski-dense in $\SL_2(\Qbar)$, $\phi_n=\phi_n^{\sigma}$. This means that $\phi_n(\prescript{}{T^{a,b}}{\SL_2}(F))<\prescript{}{\zeta}{\Sp_{2n}}(F)$. Hence it satisfies the assumptions of Lemma \ref{lemmaphicompatiblecocycle}. If $n$ is even, we described above the associated arithmetic group. If $n$ is odd, Lemma \ref{lemmapartialsurjective} shows that there no other arithmetic group than $\SU(I_n,\overline{\phantom{s}},\mathcal{O}_{\Gamma})$ up to wide commensurability.
\end{proof}

\section{Deformations of maximal diagonal representations}
\label{sectiondeformationsofmaximaldiagonalrepresentations}

Let $\Lambda$ be a lattice of $\Sp(2n,\mathds{R})$ that is not widely commensurable with $\Sp(4k+2,\mathds{Z})$ when $n=2k+1$. By Proposition \ref{propclassificationlatticephi}, there exists a quaternion algebra other than $\textrm{M}_2(\mathds{Q})$ with an order $\mathcal{O}$ such that $\phi_n(\mathcal{O}^1)<\Lambda$ up to conjugation. Since the quaternion algebra is not $\textrm{M}_2(\mathds{Q})$, $\mathcal{O}^1$ is a uniform lattice of $\SL(2,\mathds{R})$. Hence it contains a finite index surface subgroup.

Let $g\geq2$ such that there exists an embedding $j:\pi_1(S_g)\hookrightarrow\mathcal{O}^1$. Then $\phi_n\circ j$ is a diagonal maximal representation with image in $\Lambda$ up to conjugation. In this section, we construct Zariski-dense maximal representations in $\Lambda$ that are continuous deformations of $\phi_n\circ j$.

\subsection{Centralizer of a diagonal element}

The representations that we will build in Section \ref{subsectionproofoftheoremdiagonal} will be bendings of $\phi_n\circ j$ along a simple closed curve. This requires to find such a curve with big enough centralizer in $\Lambda$. In this section, we show that for suitable $\Lambda$, diagonal elements have a big centralizer in $\Lambda$.

Let $\lambda>1$ and $D=\Diag(\lambda,\lambda^{-1})\in\SL(2,\mathds{R})$.
The centralizer of $\phi_n(D)$ in $\textrm{M}_{2n}(\mathds{C})$ is the set of matrices of the form \begin{equation}
    \label{equationcommutator}
        \begin{pmatrix}
            a_{11}&0&a_{12}&0&\dots&a_{1n}&0\\
            0&b_{11}&0&b_{12}&\dots&0&b_{1n}\\
            
            a_{21}&0&a_{22}&0&\dots&a_{2n}&0\\

            0&b_{21}&0&b_{22}&\dots&0&b_{2n}\\
            \dots&\dots&\dots&\dots&\dots&\dots&\dots\\
            a_{n1}&0&a_{n2}&0&\dots&a_{nn}&0\\
            0&b_{n1}&0&b_{n2}&\dots&0&b_{nn}
        \end{pmatrix}
    \end{equation} where $(a_{ij}),(b_{ij})\in\textrm{M}_n(\mathds{C})$. If one interprets $\textrm{M}_{2n}(\mathds{C})$ as $\textrm{M}_n(\textrm{M}_2(\mathds{C}))$, i.e. as $n$-by-$n$ matrices with coefficients in $\textrm{M}_2(\mathds{C})$, $\phi_n(D)$ is a homothety. Matrices of the form \eqref{equationcommutator} are thus $n$-by-$n$ matrices whose coefficients in $\textrm{M}_2(\mathds{C})$ commute with $D$.

\begin{lemma}
\label{lemmacommutatorphiA}
    The centralizer of $\phi_n(D)$ in $\Sp(2n,\mathds{C})$ is the group of matrices of the form \eqref{equationcommutator}
     with $(a_{ij})\in\GL(n,\mathds{C})$ and $(b_{ij})=(a_{ij})^{-\top}$.
\end{lemma}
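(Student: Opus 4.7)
The plan is to intersect the description of the centralizer of $\phi_n(D)$ in $\textrm{M}_{2n}(\mathds{C})$ given by \eqref{equationcommutator}, which was established just before the statement, with $\Sp(2n,\mathds{C})$, whose defining condition is $M^\top K_n M = K_n$. The only work is to spell out what the symplectic condition becomes on matrices of the form \eqref{equationcommutator}.

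To keep the computation clean, I would first perform a change of basis. Let $P$ be the permutation matrix reordering $(e_1,\ldots,e_{2n})$ into $(e_1,e_3,\ldots,e_{2n-1},e_2,e_4,\ldots,e_{2n})$. A matrix of the form \eqref{equationcommutator} is conjugated by $P$ into the block diagonal matrix $\Diag(A,B)$, where $A=(a_{ij})$ and $B=(b_{ij})$ lie in $\textrm{M}_n(\mathds{C})$. Under the same conjugation, $K_n = I_n \otimes K$ becomes $J := \begin{pmatrix} 0 & I_n \\ -I_n & 0 \end{pmatrix}$, because each $2$-by-$2$ block $K$ pairs $e_{2i-1}$ with $e_{2i}$. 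Since $P$ is a permutation matrix, $P^\top = P^{-1}$, so the symplectic condition is preserved under this change of basis.

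Then the identity $\Diag(A,B)^\top J \Diag(A,B) = J$ expands block by block to
\[
\begin{pmatrix} 0 & A^\top B \\ -B^\top A & 0 \end{pmatrix} = \begin{pmatrix} 0 & I_n \\ -I_n & 0 \end{pmatrix},
\]
which is equivalent to the single equation $A^\top B = I_n$. This forces $A \in \GL(n,\mathds{C})$ and $B = A^{-\top}$, giving exactly the description in the statement.

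There is no real obstacle here: the argument is a routine block-matrix computation that becomes essentially immediate once the basis is reordered so that matrices of the pattern \eqref{equationcommutator} are block diagonal. The only point requiring mild care is the compatibility of the conjugation with the bilinear form $K_n$, which is automatic because the conjugating matrix is orthogonal.
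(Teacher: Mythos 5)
Your proof is correct and follows essentially the same route as the paper: conjugate by the permutation $P$ sending $e_k\mapsto e_{2k-1}$ and $e_{n+k}\mapsto e_{2k}$ to make matrices of the form \eqref{equationcommutator} block diagonal, observe that $P^\top K_n P=\begin{pmatrix}0&I_n\\-I_n&0\end{pmatrix}$, and read off the condition $B=A^{-\top}$ from the transformed symplectic relation. The only cosmetic difference is that you spell out the block computation $A^\top B=I_n$ explicitly, whereas the paper states the conclusion directly.
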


\begin{proof}
Let $M\in\Sp(2n,\mathds{C})$ that commutes with $\phi_n(D)$. Consider $P$ the matrix defined on the canonical basis of $\mathds{C}^{2n}$ by $Pe_{k}=e_{2k-1}$ and $Pe_{n+k}=e_{2k}$ for all $1\leq k\leq n$. Then
\begin{equation*}
    P^{-1}MP=\begin{pmatrix}
        M_1&\\
        &M_2
    \end{pmatrix}
\end{equation*} with $M_1,M_2\in\textrm{M}_n(\mathds{C})$. The matrix $M$ preserves $K_n$, i.e. $M^{\top}K_nM=K_n$, if and only if $P^{-1}MP$ preserves $P^\top K_nP$. Since
\begin{equation*}
    P^\top K_nP=\begin{pmatrix}
        0&I_n\\
        $-$I_n&0
    \end{pmatrix},
\end{equation*}$M_2=M_1^{-\top}$.
\end{proof}

Let $F$ be a totally real number field and $(a,b)_F$ a quaternion algebra that splits at exactly one real place of $F$. Let $\eta:\Gal(\Qbar/F)\to\PO(n,\Qbar)$ a $1$-cocycle. 

\begin{lemma}
\label{lemmastableconjugation}
    For all $\sigma\in\Gal(\Qbar/F)$, the centralizer of $\phi_n(D)$ in $\Sp(2n,\overline{\mathds{Q}})$ is stable under conjugation by $\eta(\sigma)\otimes T^{a,b}_{\sigma}$.
\end{lemma}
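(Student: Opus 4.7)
The plan is to reduce the statement to a small, explicit check involving the four possible values of $T^{a,b}_\sigma$, after describing the centralizer in a form-friendly way.

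First I would use the Kronecker product picture. View $\textrm{M}_{2n}(\Qbar)$ as $\textrm{M}_n(\textrm{M}_2(\Qbar))$, so that $\phi_n(D) = I_n \otimes D$. Its centralizer in $\textrm{M}_{2n}(\Qbar)$ therefore consists of $n \times n$ matrices whose $2 \times 2$ entries commute with $D$; since $\lambda^2 \neq 1$, such entries are exactly the diagonal $2\times 2$ matrices. As the space of diagonal $2 \times 2$ matrices is spanned by $I_2$ and $J := \Diag(1,-1)$, the centralizer of $\phi_n(D)$ in $\textrm{M}_{2n}(\Qbar)$ is
\begin{equation*}
\{X \otimes I_2 + Y \otimes J \mid X, Y \in \textrm{M}_n(\Qbar)\}.
\end{equation*}
By Lemma \ref{lemmacommutatorphiA}, the intersection with $\Sp(2n, \Qbar)$ is obtained by further imposing the symplectic condition (i.e.\ $(a_{ij})\in\GL(n,\Qbar)$ with $(b_{ij}) = (a_{ij})^{-\top}$).

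Next I would fix a lift $\tilde\eta(\sigma) \in \OO(n,\Qbar)$ of $\eta(\sigma)$ and a lift $\tilde T_\sigma \in \SL(2,\Qbar)$ of $T^{a,b}_\sigma$. As remarked before Lemma \ref{lemmacentralizerImphin}, $\tilde\eta(\sigma) \otimes \tilde T_\sigma$ lies in $\Sp(2n,\Qbar)$, so conjugation by it preserves $\Sp(2n,\Qbar)$; moreover the conjugation action on $\textrm{M}_{2n}(\Qbar)$ descends to the class $\eta(\sigma) \otimes T^{a,b}_\sigma$ in $\PSp(2n, \Qbar)$, independently of the chosen lift. Using the multiplicative identity $(A \otimes B)(X \otimes Y)(A \otimes B)^{-1} = AXA^{-1} \otimes BYB^{-1}$, for any $X \otimes I_2 + Y \otimes J$ in the centralizer one gets
\begin{equation*}
\tilde\eta(\sigma) X \tilde\eta(\sigma)^{-1} \otimes I_2 + \tilde\eta(\sigma) Y \tilde\eta(\sigma)^{-1} \otimes \tilde T_\sigma J \tilde T_\sigma^{-1}.
\end{equation*}

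The heart of the argument is then the observation that $\tilde T_\sigma J \tilde T_\sigma^{-1}$ is again a diagonal $2 \times 2$ matrix, so the expression above still has the form $X' \otimes I_2 + Y' \otimes J$ (for suitable $X', Y'$). This is a finite check against the four explicit matrices in the image of $T^{a,b}$: when $\tilde T_\sigma \in \{\pm I_2, \pm J\}$ we obtain $J$ itself, and when $\tilde T_\sigma$ is one of the antidiagonal matrices $\pm\begin{pmatrix} 0 & 1 \\ 1 & 0 \end{pmatrix}$ or $\pm\begin{pmatrix} 0 & 1 \\ -1 & 0 \end{pmatrix}$, we get $-J$. Either way the result is a scalar multiple of $J$, hence diagonal.

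Combining the two facts — the conjugate stays in $\Sp(2n, \Qbar)$ and simultaneously stays in the centralizer of $\phi_n(D)$ inside $\textrm{M}_{2n}(\Qbar)$ — yields stability of the centralizer of $\phi_n(D)$ in $\Sp(2n, \Qbar)$ under conjugation by $\eta(\sigma) \otimes T^{a,b}_\sigma$. There is no real obstacle; the only subtlety worth flagging is the need to lift from projective groups to $\Sp(2n,\Qbar)$ so that the Kronecker identity applies cleanly, and then to notice that the conjugation is independent of the lift.
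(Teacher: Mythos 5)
Your argument is correct and is essentially the paper's own proof carried out in full detail: both rely on the explicit Kronecker-product structure, the fact that $\tilde\eta(\sigma)\otimes\tilde T_\sigma\in\Sp(2n,\Qbar)$, and a direct check that the four possible $T^{a,b}_\sigma$ conjugate $J=\Diag(1,-1)$ to $\pm J$. The paper abbreviates this by first noting that $\eta(\sigma)\otimes I_2$ already lies in the centralizer (so conjugation by it is automatic) and only invoking the explicit computation for $I_n\otimes T^{a,b}_\sigma$, but the content and conclusion are the same as yours.
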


\begin{proof}
    Fix $\sigma\in\Gal(\Qbar/F)$. The centralizer of $\phi_n(D)$ is stable under conjugation by $\eta(\sigma)\otimes I_2$ since the latter lies in it. Moreover, it is stable under conjugation by $I_n\otimes T^{a,b}_{\sigma}$ as shown by an explicit computation computations. Hence it is stable by their product.
\end{proof}

Consider the $F$-algebraic group $\mathds{H}$ that associated to any $F$-algebra $\textrm{R}$ the subgroup of $\Sp(2n,\textrm{R})$ that consists of matrices of the form \eqref{equationcommutator} with $\det((a_{ij})_{ij})=1$. It is isomorphic to the $F$-algebraic group $\SL_n$. The main result of this section is the following lemma.

\begin{lemma}
\label{lemmacommutatorlattice}
    Let $\sigma_0$ be the only real place of $F$ over which $(a,b)_F$ splits. Then $\mathds{H}(\mathds{R}^{\sigma_0})\cap\prescript{}{\eta\otimes T^{a,b}}{\Sp_{2n}}(\mathcal{O}_F)$ is a lattice in $\mathds{H}(\mathds{R}^{\sigma_0})$.
\end{lemma}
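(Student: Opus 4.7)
The plan is to realize $\mathds{H}$ as a semisimple $F$-algebraic subgroup of the twisted group $\mathds{G}:=\prescript{}{\eta\otimes T^{a,b}}{\Sp_{2n}}$, and then to apply the Borel--Harish-Chandra theorem to $\mathds{H}$ in combination with the compactness of $\mathds{G}$ at the real places other than $\sigma_0$.

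First, I would use Lemma \ref{lemmastableconjugation} to argue that the centralizer of $\phi_n(D)$ in $\Sp(2n,\Qbar)$ is stable under the twisted Galois action
\[\sigma\cdot M=(\eta(\sigma)\otimes T^{a,b}_\sigma)\,\sigma(M)\,(\eta(\sigma)\otimes T^{a,b}_\sigma)^{-1},\]
so it defines an $F$-algebraic subgroup of $\mathds{G}$. By Lemma \ref{lemmacommutatorphiA} this centralizer is an $F$-form of $\GL_n$, and its derived subgroup, which is intrinsically characterized and hence also defined over $F$, is precisely $\mathds{H}$. Thus $\mathds{H}$ is a connected semisimple $F$-subgroup of $\mathds{G}$, an $F$-form of $\SL_n$.

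Next, I would invoke the Borel--Harish-Chandra theorem: since $\mathds{H}$ is a semisimple $F$-group, $\mathds{H}(\mathcal{O}_F)$ embeds diagonally as a lattice in $\prod_{\sigma}\mathds{H}(\mathds{R}^{\sigma})$, the product running over the real embeddings of $F$. For every $\sigma\neq\sigma_0$ the quaternion algebra $(a,b)_F$ is ramified, so $\mathds{G}(\mathds{R}^{\sigma})$ is a compact form of $\Sp_{2n}$, and $\mathds{H}(\mathds{R}^{\sigma})$ is compact as a closed subgroup of a compact group. Projecting out these compact factors preserves the lattice property and produces a lattice in $\mathds{H}(\mathds{R}^{\sigma_0})$. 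This projected image is commensurable with $\mathds{H}(\mathds{R}^{\sigma_0})\cap\mathds{G}(\mathcal{O}_F)$: any element of the intersection is integral in $\mathds{G}$ and lies in the $F$-subgroup $\mathds{H}$, hence in $\mathds{H}(\mathcal{O}_F)$ up to the choice of integral model, and conversely.

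The step I expect to require the most care is the first: checking that $\mathds{H}$ descends from a $\Qbar$-subvariety to a genuine $F$-subgroup of the twisted group $\mathds{G}$. This is exactly the content of Lemma \ref{lemmastableconjugation} combined with the intrinsic nature of the derived subgroup; once this is in place, the rest is a standard application of reduction theory, with the compactness of $\mathds{G}(\mathds{R}^{\sigma})$ at $\sigma\neq\sigma_0$ already implicit in the fact that $\mathds{G}(\mathcal{O}_F)$ is a lattice in $\Sp(2n,\mathds{R})$ alone rather than in a larger product.
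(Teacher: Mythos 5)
Your proposal is correct and follows essentially the same route as the paper: Lemma \ref{lemmastableconjugation} shows the centralizer of $\phi_n(D)$ is preserved by the twisted Galois action, giving a semisimple $F$-form of $\SL_n$ (the paper constructs this as a twist $\mathds{G}$ of $\mathds{H}$ via a cocycle in $\Aut(\mathds{H}(\Qbar))$, while you realize it directly as an $F$-subgroup of $\prescript{}{\eta\otimes T^{a,b}}{\Sp_{2n}}$ and take the derived subgroup of the centralizer — this is the same group). Borel--Harish-Chandra then gives a lattice in the product over real places, and compactness of the twisted symplectic group at every place $\sigma\neq\sigma_0$ lets you project down to $\mathds{H}(\mathds{R}^{\sigma_0})$, exactly as in the paper.
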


\begin{proof}
    For all $\sigma\in\Gal(\Qbar/F)$, $\eta(\sigma)\otimes T^{a,b}_{\sigma}$ induces an automorphism of $\mathds{H}(\Qbar)$ as shown by Lemma \ref{lemmastableconjugation}. Hence $\eta\otimes T^{a,b}$ defines a $1$-cocycle in $\Aut(\mathds{H}(\Qbar))$ and thus correspond to an $\Qbar/F$-form of $\mathds{H}$. Denote the latter by $\mathds{G}$. Since $\mathds{G}$ is semisimple, Borel-Harish-Chandra Theorem \cite{Borel_Arithmeticsubgroupsalgebraicgroups} implies that $\mathds{G}(\mathcal{O}_F)$ is a lattice of $$\prod_{\sigma:F\hookrightarrow\mathds{R}}\mathds{G}(\mathds{R}^{\sigma}).$$
    For all $\sigma\neq\sigma_0$, $\mathds{G}(\mathds{R}^{\sigma})$ is a closed subgroup of $\prescript{}{\eta\otimes T^{a,b}}{\Sp_{2n}(\mathds{R}^{\sigma})}$ which is a compact Lie group.
    Hence $\mathds{G}(\mathcal{O}_F)$ is a lattice of $\mathds{G}(\mathds{R}^{\sigma_0})=\mathds{H}(\mathds{R}^{\sigma_0})$.
\end{proof}

\subsection{Mapping class group orbits}

To prove that the construction given in the next section gives rise to infinitely many mapping class group orbits of representations, we use the Strong Approximation Theorem. The following theorem is an adaptation to our context of the one in \cite{Weisfeiler_StrongapproximationZariskidensesubgroups}. See \S5.2 of \cite{Audibert_ZariskidenseHitchinrepresentationsuniformlattices} for the technical details.

\begin{theorem}[Strong Approximation, Weisfeiler \cite{Weisfeiler_StrongapproximationZariskidensesubgroups}]
\label{propstrongapproximationadapted}
Let $F$ be a totally real number field and $\mathds{G}$ be a connected, almost simple and simply-connected $F$-algebraic group which is compact over all real places of $F$ except one. Let $\Gamma<\mathds{G}(F)$ be a finitely generated Zariski-dense subgroup. There exists a finite index subgroup $\Gamma'<\Gamma$, $a\in\mathcal{O}_F$ and a group scheme structure $\mathds{G}_0$ over $({\mathcal{O}_F})_a$ on $\mathds{G}$ such that $\Gamma'<\mathds{G}_0(({\mathcal{O}_F})_a)$ and for all prime ideals $\mathcal{P}$ of $\mathcal{O}_F$ but finitely many, $\Gamma'$ surjects onto $$\mathds{G}_0\left(\mathcal{O}_F\!\raisebox{-.65ex}{\ensuremath{/\mathcal{P}}}\right).$$
\end{theorem}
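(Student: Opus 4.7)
The plan is to deduce this statement from Weisfeiler's original strong approximation theorem \cite{Weisfeiler_StrongapproximationZariskidensesubgroups}, whose conclusion is adelic, by spreading $\mathds{G}$ out to a group scheme over a suitable localization of $\mathcal{O}_F$ and then reading off surjectivity at individual finite primes. The main issues are bookkeeping: constructing the integral model, finding the element $a$, and passing to a finite-index subgroup to discard the few bad primes.

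First I would fix a faithful $F$-rational representation $\mathds{G}\hookrightarrow\SL_N$ and consider a finite generating set of $\Gamma$. Clearing denominators of the matrix coefficients of the generators (and of the polynomials cutting out $\mathds{G}$ inside $\SL_N$) produces $a_0\in\mathcal{O}_F\setminus\{0\}$ such that $\Gamma<\SL_N(({\mathcal{O}_F})_{a_0})$ and such that $\mathds{G}$ extends to a closed subgroup scheme of $\SL_{N,({\mathcal{O}_F})_{a_0}}$. By standard spreading-out (as recorded e.g. in \cite[\S5.2]{Audibert_ZariskidenseHitchinrepresentationsuniformlattices}), after enlarging $a_0$ to some $a\in\mathcal{O}_F$ we may arrange that this extension $\mathds{G}_0$ is smooth, affine, fibrewise connected, semisimple and simply connected over $({\mathcal{O}_F})_a$, so that for every prime ideal $\mathcal{P}$ of $\mathcal{O}_F$ coprime to $a$ the reduction $\mathds{G}_0(\mathcal{O}_F/\mathcal{P})$ makes sense and is a finite group of Lie type.

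Next, I would apply Weisfeiler's theorem to the finitely generated Zariski-dense subgroup $\Gamma<\mathds{G}(F)$. Using restriction of scalars $\Res_{F/\mathds{Q}}\mathds{G}$, the hypotheses (connected, almost simple, simply connected, non-compact at exactly one archimedean place of $F$) translate into the hypotheses of the classical adelic statement over $\mathds{Q}$. The conclusion is that the closure of $\Gamma$ inside the non-archimedean part
\begin{equation*}
\prod_{\mathcal{P}\nmid a}\mathds{G}_0(\widehat{\mathcal{O}}_{F,\mathcal{P}})
\end{equation*}
is open. I would then take $\Gamma'$ to be a finite-index subgroup of $\Gamma$ contained in the preimage, under this closure, of a product of congruence subgroups trivial modulo the finitely many primes where $\mathds{G}_0$ has pathological reduction; at every other prime $\mathcal{P}$, openness of the closure forces $\Gamma'\twoheadrightarrow\mathds{G}_0(\mathcal{O}_F/\mathcal{P})$.

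The main obstacle is the first step: ensuring that after localizing at $a$ the reductions $\mathds{G}_0(\mathcal{O}_F/\mathcal{P})$ are themselves (almost) simple groups of the correct type, so that Goursat-type arguments built into Weisfeiler's proof produce surjectivity rather than merely a proper subgroup of full projective image. Once this spreading-out is in place, the argument is essentially the adelic-to-residue translation carried out in \cite[\S5.2]{Audibert_ZariskidenseHitchinrepresentationsuniformlattices}, which I would cite for the technical details rather than reproduce.
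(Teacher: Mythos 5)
Your proposal is correct and follows the standard derivation: spread out $\mathds{G}$ to a semisimple simply connected group scheme $\mathds{G}_0$ over a localization $(\mathcal{O}_F)_a$ containing $\Gamma$, invoke Weisfeiler's openness of the adelic closure, and read off surjectivity onto $\mathds{G}_0(\mathcal{O}_F/\mathcal{P})$ at all good primes. The paper in fact gives no proof of this statement; it cites Weisfeiler directly and refers to \S5.2 of \cite{Audibert_ZariskidenseHitchinrepresentationsuniformlattices} for the technical details of the adaptation, which your outline essentially reproduces.

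Two small remarks. First, the restriction-of-scalars detour is unnecessary: Weisfeiler's theorem is stated and proved for a connected, simply connected, absolutely almost simple group over any field that is finitely generated over its prime field, so you may apply it directly over $F$ without passing to $\Res_{F/\mathds{Q}}\mathds{G}$; the hypothesis of non-compactness at exactly one real place also plays no role in Weisfeiler's argument (it is simply the setting of the paper). Second, the passage to a finite-index $\Gamma'$ cutting out congruence conditions at the bad primes is not needed for the surjectivity claim, since the statement only asserts surjection at all but finitely many $\mathcal{P}$; the finite-index subgroup in the statement is there for flexibility (e.g.\ to absorb denominators into the integral model), and one may equally well take $\Gamma'=\Gamma$ after enlarging $a$. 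Neither of these affects the correctness of your argument.
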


Let $F$ be a totally real number field. Let $\mathcal{O}$ be an order of a quaternion algebra $A$ that splits at exactly one real place of $F$. Let $\Gamma<\SU(I_n,\overline{\phantom{s}};\mathcal{O})$ be a finitely generated Zariski-dense subgroup.
Consider the $F$-algebraic group defined as a functor from $F$-algebras to groups by
$$R\mapsto\SU(I_n,\overline{\phantom{s}};A\otimes_FR):=\{M\in\SL_n(A\otimes_FR)\mid \overline{M}^tM=I_n\}$$ where the conjugation on $A$ is extended trivially on $A\otimes_FR$. It is a connected, almost simple and simply-connected algebraic group since it is an $\Qbar/F$-form of $\Sp_{2n}$. Furthermore it is non-compact over only one real place since $A$ splits only at real place. The Strong Approximation Theorem implies that there exists $a\in\mathcal{O}_F$ such that $\Gamma$ surjects $\Sp(2n,\mathcal{O}_F/\mathcal{P})$ for any prime ideal $\mathcal{P}$ not containing $a$. 

Let $\mathcal{P}$ be a prime ideal not containing $a$. Denote by $\mathds{F}_q$ its residue field and by $\pi:\SU(I_n,\overline{\phantom{s}};\mathcal{O})\to\Sp(2n,\mathds{F}_q)$ the reduction map.
It is not true that two Zariski-dense subgroups $\Gamma_1$ and $\Gamma_2$ of $\SU(I_n,\overline{\phantom{s}};\mathcal{O})$ that are conjugate under $\Sp(2n,\mathds{R})$ satisfy $\pi(\Gamma_1)=\pi(\Gamma_2)$. Nevertheless, we have the following.

\begin{lemma}
\label{lemmareduction}
     Let $\Gamma<\SU(I_n,\overline{\phantom{s}};\mathcal{O})$ be a Zariski-dense subgroup and let $g\in\GL(2n,\mathds{C})$ such that $g\Gamma g^{-1}<\SU(I_n,\overline{\phantom{s}};\mathcal{O})$. If $\pi(\Gamma)=\Sp(2n,\mathds{F}_q)$, then $\pi(g\Gamma g^{-1})=\Sp(2n,\mathds{F}_q)$.
\end{lemma}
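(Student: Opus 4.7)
The plan is to show that the hypothesis $\pi(\Gamma)=\Sp(2n,\mathds{F}_q)$ forces $g$, up to a scalar, to be $\mathcal{P}$-integral with invertible reduction in $\textnormal{GSp}(2n,\mathds{F}_q)$; the conclusion then follows by conjugating $\pi(\Gamma)=\Sp(2n,\mathds{F}_q)$ by the reduction of $g$.

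First I would pin down the algebraic structure of $g$. Because $\Gamma$ is Zariski-dense in $\Sp(2n,\mathds{C})$, its $F$-linear span inside $\textrm{M}_{2n}(\mathds{C})=\textrm{M}_n(A)\otimes_F\mathds{C}$ equals the whole central simple $F$-algebra $\textrm{M}_n(A)$ (by comparing dimensions after tensoring with $\mathds{C}$). The assumption $g\Gamma g^{-1}\subset\SU(I_n,\overline{\phantom{s}};\mathcal{O})\subset\textrm{M}_n(A)$ then forces conjugation by $g$ to preserve $\textrm{M}_n(A)$, so by Skolem--Noether $g=\lambda h$ for some $\lambda\in\mathds{C}^{\times}$ and $h\in\GL_n(A)$. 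Absorbing the scalar, which does not affect conjugation, I assume $g\in\GL_n(A)$. The same Zariski-density also forces $g$ to normalize $\Sp(2n,\mathds{C})$, so in fact $g\in\textnormal{GSp}(2n,F)$.

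Next, localize at $\mathcal{P}$ (where $A$ splits, since $\pi$ has target $\Sp(2n,\mathds{F}_q)$) and apply the Cartan decomposition in $\textnormal{GSp}(2n,F_\mathcal{P})$ to write $g=k_1 d k_2$ with $k_1,k_2\in\textnormal{GSp}(2n,\mathcal{O}_\mathcal{P})$ and $d=\Diag(\pi_\mathcal{P}^{e_1},\dots,\pi_\mathcal{P}^{e_{2n}})$, the exponents satisfying the symplectic pairing $e_i+e_{\tau(i)}=e$ induced by the form, where $\tau$ is the involution on indices coming from $K_n$. The heart of the proof is to show that $d$ is central in $\textnormal{GSp}$, i.e., that all the $e_i$ coincide. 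If not, the condition $g\gamma g^{-1}\in\Sp(2n,\mathcal{O}_\mathcal{P})$ gives, for every $\gamma\in\Gamma$, that $(k_2\gamma k_2^{-1})_{ij}\in\mathcal{P}^{e_j-e_i}$ whenever $e_j>e_i$. Reducing modulo $\mathcal{P}$, $\bar{k}_2\pi(\Gamma)\bar{k}_2^{-1}$ is contained in the stabilizer of the flag determined by the $e_i$; the pairing $e_i+e_{\tau(i)}=e$ makes this an isotropic flag, so its stabilizer in $\Sp(2n,\mathds{F}_q)$ is a proper parabolic. But $\bar{k}_2\pi(\Gamma)\bar{k}_2^{-1}=\bar{k}_2\Sp(2n,\mathds{F}_q)\bar{k}_2^{-1}=\Sp(2n,\mathds{F}_q)$, a contradiction.

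Therefore $d$ is a scalar, and $g\in F_\mathcal{P}^{\times}\cdot\textnormal{GSp}(2n,\mathcal{O}_\mathcal{P})$; the reduction $\bar{g}\in\textnormal{GSp}(2n,\mathds{F}_q)$ is well defined modulo scalars and normalizes $\Sp(2n,\mathds{F}_q)$, so $\pi(g\Gamma g^{-1})=\bar{g}\,\pi(\Gamma)\,\bar{g}^{-1}=\Sp(2n,\mathds{F}_q)$. The main obstacle is the parabolic identification in the previous paragraph: one must verify that the block structure cut out by distinct $e_i$'s yields a proper parabolic of $\Sp(2n)$ specifically, not just of $\GL_{2n}$. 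This is where the symplectic pairing on $d$ inherited from $g\in\textnormal{GSp}$ is essential, as it forces the associated flag to be isotropic so that its stabilizer in $\Sp$ is the desired proper parabolic, converting non-triviality of $d$ into non-surjectivity of $\pi$.
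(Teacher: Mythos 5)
Your proof is correct, and its core argument is genuinely different from the paper's. You and the paper share the opening move: Zariski-density gives $F\Gamma=\textrm{M}_n(A)$, Skolem--Noether then puts $g$ (up to a complex scalar) in $\GL_n(A)$, and one localizes at $\mathcal{P}$ where $A$ splits. After that the two arguments diverge. The paper stays entirely inside noncommutative linear algebra: it produces an auxiliary $h$ with $gh=\Nrd(g)$ from the characteristic polynomial, reduces the inclusion $g\Gamma h\subset\Nrd(g)\,\textrm{M}_n(\mathcal{O}\otimes R_\mathcal{P})$ modulo $\mathcal{P}$, and plays $h$ off against its cofactor matrix to force $\nu_\mathcal{P}(\Nrd(g))=0$; this uses nothing about the symplectic form beyond the fact that $\Sp(2n,\mathds{F}_q)$ spans $\textrm{M}_{2n}(\mathds{F}_q)$. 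You instead observe that $g$ normalizes the Zariski closure $\Sp_{2n}$, hence lies in $\textnormal{GSp}(2n,F_\mathcal{P})$ after localization, and invoke the Cartan (elementary divisor) decomposition $g=k_1dk_2$ there; the similitude constraint $e_i+e_{\tau(i)}=e$ on the diagonal entries of $d$ is exactly what makes the resulting flag isotropic, so that a nonscalar $d$ would push $\bar{k}_2\,\Sp(2n,\mathds{F}_q)\,\bar{k}_2^{-1}$ into a proper parabolic of $\Sp(2n,\mathds{F}_q)$, a contradiction. The paper's route is more elementary and, if anything, more portable (it would apply verbatim to reductions of $\GL_n(A)$ with no form in sight), while yours is more conceptual: it explains the conclusion as the statement that a surjective reduction cannot land in a proper parabolic, and it localizes the whole difficulty in one clean group-theoretic fact about $\textnormal{GSp}$ over a local field. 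Both are valid; just make sure to note, as the paper implicitly does, that $\mathcal{P}$ must avoid the finitely many primes where $A$ ramifies or $\mathcal{O}\otimes R_\mathcal{P}$ fails to be a maximal order, so that the identification with $\textnormal{GSp}(2n,F_\mathcal{P})\supset\textnormal{GSp}(2n,R_\mathcal{P})$ and the Cartan decomposition are available.
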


\begin{proof}
    We first show that there is a $\mathds{C}$-scalar multiple of $g$ that lies in $\GL(n,A)$. Consider the $F$-subalgebra of $\textrm{M}_n(A)$ generated by $\Gamma$, i.e$.$
    $$F\Gamma=\{\textstyle\sum a_i\gamma_i\mid\gamma_i\in\Gamma,\ a_i\in F\}.$$ Proposition 2.2 in \cite{Bass_Groupsofintegralrepresentationtype} shows that $F\Gamma$ is a central simple algebra of dimension $4n^2$. Hence $F\Gamma=\textrm{M}_n(A)$. It follows that the conjugation by $g$ preserves $\textrm{M}_n(A)$. Since all automorphisms of $\textrm{M}_n(A)$ are inner, see Corollary 2.9.9 in \cite{Maclachlan_ArithmeticHyperbolic3Manifolds}, we can assume that $g\in\GL(n,A)$.

    Denote by $F_{\mathcal{P}}$ the completion of $F$ at $\mathcal{P}$ for the valuation $\nu_{\mathcal{P}}$ and $R_{\mathcal{P}}$ its ring of integers. We can view $\Gamma$ and $g$ in $\GL(n,A\otimes_{F}F_{\mathcal{P}})$. Up to scaling, we can assume that $g\in\textrm{M}_n(\mathcal{O}\otimes_{\mathcal{O}_F}R_{\mathcal{P}})$ and that $g$ is non-trivial modulo $\mathcal{P}$. Let $\Nrd$ denote the reduced norm of $\textrm{M}_n(A\otimes_{F}F_{\mathcal{P}})$, i.e$.$ the composition of the embedding in $\textrm{M}_{2n}(\overline{F}_{\mathcal{P}})$ with the determinant, where $\overline{F}_{\mathcal{P}}$ is the algebraic closure of $F_{\mathcal{P}}$. We claim that $\nu_{\mathcal{P}}(\Nrd(g))=0$.
    
    There exists $h\in\textrm{M}_n(\mathcal{O}\otimes_{\mathcal{O}_F}R_{\mathcal{P}})$ such that $gh=\Nrd(g)$. This follows from the fact that there exists a polynomial $\textrm{P}$ with coefficients in $R_{\mathcal{P}}$ with constant term $\Nrd(g)$ such that $\textrm{P}(g)=0$. Suppose that $\nu_{\mathcal{P}}(\Nrd(g))\geq1$. Let $k\in\mathds{N}$ such that $h$ is trivial modulo $\mathcal{P}^k$ but not modulo $\mathcal{P}^{k+1}$.
    Reducing $g\Gamma h\subset\Nrd(g)\textrm{M}_n(\mathcal{O}\otimes_{\mathcal{O}_F}R_{\mathcal{P}})$ modulo $\mathcal{P}$ we have
    $$\overline{g}\ \Sp(2n,\mathds{F}_q)\overline{h}=0.$$
    Since $\overline{g}\neq0$, $\overline{h}=0$. Hence $k\geq1$.
    
    From $\Nrd(g)^{2n-2}gh=\Nrd(h)$, we see that viewed in a splitting field of $A\otimes_FF_{\mathcal{P}}$, $\Nrd(g)^{2n-2}g$ is the cofactor matrix of $h$. Since $h$ is trivial modulo $\mathcal{P}^k$, its cofactor matrix is trivial modulo $\mathcal{P}^{(2n-1)k}$. Hence $\nu_{\mathcal{P}}(\Nrd(g))> k$.
    Let $\omega$ be a uniformiser of $R_{\mathcal{P}}$. Reducing the equation
    $$g\Gamma \frac{h}{\omega^k}\subset\frac{\Nrd(g)}{\omega^k}\textrm{M}_n(\mathcal{O}\otimes_{\mathcal{O}_F}R_{\mathcal{P}})$$ modulo $\mathcal{P}$, we have that $\frac{\overline{h}}{\omega^k}$ is trivial. This is a contradiction.
    
    Hence $\nu_{\mathcal{P}}(\Nrd(g))=0$, i.e$.$ the reduction of $g$ modulo $\mathcal{P}$ is invertible. This implies that $\pi(g\Gamma g^{-1})=\pi(g)\pi(\Gamma) \pi(g)^{-1}$ which concludes the proof.
\end{proof}

\subsection{Proof of Theorem \ref{theoremdiagonal}}
\label{subsectionproofoftheoremdiagonal}

Write the canonical decomposition
$$\mathds{R}^{2n}=\textrm{V}_1\oplus\ldots\oplus\textrm{V}_n$$ where $\textrm{V}_i=\mathds{R}^2$. Denote by $\Delta$ the subgroup of $\Sp(2n,\mathds{R})$ that preserves this decomposition. Explicitely
\begin{equation*}
    \Delta=\left\{\begin{pmatrix}
        A_1&&\\
        &\dots&\\
        &&A_n
    \end{pmatrix}\Big|\ A_i\in\SL(2,\mathds{R}) \right\}.
\end{equation*}

\begin{lemma}
\label{lemmamaximalzariskiclosure}
Let $H$ be the Zariski closure of a maximal representation in $\Sp(2n,\mathds{R})$. Suppose that $\Delta<H$ and that $H$ acts irreducibly on $\mathds{R}^{2n}$. Then $H=\Sp(2n,\mathds{R})$.
\end{lemma}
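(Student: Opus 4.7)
The plan is to use that $\Delta$ contains a maximal $\mathds{R}$-split torus of $\Sp(2n,\mathds{R})$ and that $H$ is reductive (being the Zariski closure of a semisimple representation, by \cite{Burger_SurfacegrouperepresentationsmaximalToledo}), in order to apply the root-subgroup classification of reductive subgroups containing a maximal torus.

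First I would observe that the subgroup $T<\Delta$ of matrices $\Diag(\lambda_1,\lambda_1^{-1},\dots,\lambda_n,\lambda_n^{-1})$ is a maximal $\mathds{R}$-split torus of $\Sp(2n,\mathds{R})$. With respect to $T$ the root system is of type $C_n$, with long roots $\pm 2\epsilon_i$ and short roots $\pm\epsilon_i\pm\epsilon_j$ ($i\neq j$), and the standard representation has weights $\pm\epsilon_i$ realised on the decomposition $\mathds{R}^{2n}=\bigoplus_i V_i$. The $n$ factors of $\Delta\simeq\SL(2,\mathds{R})^n$ are precisely the root subgroups attached to the $2n$ long roots. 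Since $\Delta$ is connected, $\Delta\subset H^0$; as $H^0$ is reductive and contains $T$, it is generated by $T$ together with root subgroups $U_\alpha$ for $\alpha$ in some symmetric closed subsystem $\Psi\subset C_n$ which, by the previous sentence, must contain all long roots.

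Next I would classify such $\Psi$: declare $i\sim j$ iff some short root $\epsilon_i\pm\epsilon_j$ lies in $\Psi$. The identity $(\epsilon_i-\epsilon_j)+2\epsilon_j=\epsilon_i+\epsilon_j$ shows that one short root forces all four of $\pm\epsilon_i\pm\epsilon_j$ into $\Psi$, while $(\epsilon_i-\epsilon_j)+(\epsilon_j-\epsilon_k)=\epsilon_i-\epsilon_k$ gives transitivity. For each equivalence class $C_\ell$, $\Psi$ then contains the full $C_{|C_\ell|}$-subsystem, so $H^0\cong\prod_\ell\Sp(2|C_\ell|,\mathds{R})$ acting block-diagonally on $\bigoplus_\ell W_\ell$ with $W_\ell:=\bigoplus_{i\in C_\ell}V_i$. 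This is exactly the $H^0$-isotypic decomposition of $\mathds{R}^{2n}$, and is therefore preserved by $H$ (since $H$ normalises $H^0$).

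The irreducibility of $H$ then forces a single equivalence class, giving $\Psi=C_n$, $H^0=\Sp(2n,\mathds{R})$, and hence $H=\Sp(2n,\mathds{R})$. The main obstacle is ruling out the possibility that $H/H^0$ permutes the $W_\ell$'s transitively, producing an irreducible action of $H$ while $H^0$ remains a proper product; this is handled by the Hamlet--Pozzetti classification \cite{Hamlet_Classificationtighthomomorphisms}, which enumerates the possible Zariski closures of maximal representations and ensures that they are connected, so that $H=H^0$ and the root-datum analysis above concludes.
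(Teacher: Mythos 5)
Your argument is a genuinely different route from the paper's. The paper works at the Lie-algebra level: it invokes Burger--Iozzi--Wienhard to get that $H$ is reductive and Hermitian and that $\mathfrak{h}\hookrightarrow\mathfrak{sp}(2n,\mathds{R})$ is tight, shows $\mathfrak{h}$ is centerless and has no compact factor using $\Delta<H$, and then reads off from the Hamlet--Pozzetti classification of tight embeddings that a non-simple $\mathfrak{h}$ would act reducibly, concluding $\mathfrak{h}=\mathfrak{sp}(2n,\mathds{R})$. You instead do a direct root-subgroup analysis with respect to the maximal split torus $T<\Delta$, correctly identifying the closed symmetric subsystems of $C_n$ containing all long roots and deducing $H^0\cong\prod_\ell\Sp(2|C_\ell|,\mathds{R})$ acting block-diagonally. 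That part of your argument is clean, and it has the merit of isolating explicitly the one point where the hypothesis ``$H$ acts irreducibly'' has to interact with the possible disconnectedness of $H$.

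However, the way you resolve that last point is a genuine gap. You assert that the Hamlet--Pozzetti classification ``enumerates the possible Zariski closures of maximal representations and ensures that they are connected.'' That is not what \cite{Hamlet_Classificationtighthomomorphisms} proves: it classifies tight \emph{homomorphisms of Lie algebras} (equivalently, of connected Lie groups), and says nothing about the component group of an algebraic Zariski closure. Indeed the paper cites it precisely at the Lie-algebra level, and draws its conclusion $H=\Sp(2n,\mathds{R})$ only after pinning down $\mathfrak{h}=\mathfrak{sp}(2n,\mathds{R})$, at which point $H^0<H<\Sp(2n,\mathds{R})$ forces equality with no separate connectedness input. Burger--Iozzi--Wienhard likewise gives reductivity, Hermitian type, and tightness, but not connectedness. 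So your final step needs a different justification. As it stands, the scenario you flag --- $H^0=\prod_\ell\Sp(2|C_\ell|,\mathds{R})$ a proper product with $H/H^0$ permuting the blocks transitively --- is exactly the case your argument must exclude, and citing Hamlet--Pozzetti for connectedness does not exclude it. Note that the paper's own proof reduces the hypothesis ``$H$ acts irreducibly'' to ``$\mathfrak{h}$ acts irreducibly'' at this point as well; if you want a self-contained fix you would need to argue, for instance, that in the bending situation where the lemma is applied the restriction of the representation to a finite-index subgroup still has Zariski closure containing $\Delta$ and is still irreducible, or otherwise import an argument ruling out a transitive permutation action of $H/H^0$ on the isotypic blocks.
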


\begin{proof}
    Corollary 4 in \cite{Burger_TighthomomorphismsHermitiansymmetricspaces} states that $H$ is reductive and its associated symmetric space is Hermitian. Denote by $\mathfrak{h}$ the Lie algebra of $H$ that we see embedded in $\mathfrak{sp}(2n,\mathds{R})$. Since $\Delta<H$, the centralizer of $H$ in $\Sp(2n,\mathds{R})$ has to be discrete. Hence $\mathfrak{h}$ has no center and so is semisimple.

    Write $\mathfrak{h}=\mathfrak{k}\oplus\mathfrak{h}_0$ where $\mathfrak{k}$ is a maximal compact ideal in $\mathfrak{h}$. Consider a $\mathfrak{sl}_2(\mathds{R})$-subalgebra of $\mathfrak{h}$. We denote by $\pi$ the composition of the inclusion with the projection to $\mathfrak{k}$
    \begin{equation*}
    \pi:\mathfrak{sl}_2(\mathds{R})\hookrightarrow\mathfrak{k}\oplus\mathfrak{h}_0\rightarrow\mathfrak{k}.
    \end{equation*}
    The kernel of $\pi$ cannot be trivial since a compact algebra does not contain $\mathfrak{sl}_2(\mathds{R})$. Since $\mathfrak{sl}_2(\mathds{R})$ is simple, $\pi$ has to be trivial. Hence $\mathfrak{sl}_2(\mathds{R})$ is a subalgebra of $\mathfrak{h}_0$. The Lie algebra of $\Delta$ is then also a subalgebra of $\mathfrak{h}_0$. This implies that $\mathfrak{k}$ centralizes the Lie algebra of $\Delta$, so $\mathfrak{k}$ is trivial.
    
    Corollary 4 in \cite{Burger_TighthomomorphismsHermitiansymmetricspaces} shows that the inclusion $\mathfrak{h}\hookrightarrow\mathfrak{sp}(2n,\mathds{R})$ is tight. For Lie algebras without compact factors, tight embeddings are classfied by Hamlet-Pozzetti in \cite{Hamlet_Classificationtighthomomorphisms} \S5.2. In particular, Table 6 and its interpretation show that if $\mathfrak{h}$ were non-simple then it would act reducibly on $\mathds{R}^{2n}$. Knowing that $H$ contains $\Delta$, the classification implies that $H=\Sp(2n,\mathds{R})$.   
\end{proof}

We can now prove Theorem \ref{theoremdiagonal}.

\begin{proof}[Proof of Theorem \ref{theoremdiagonal}]
    Let $n\geq2$. Let $\Lambda$ be a lattice in $\Sp(2n,\mathds{R})$ not widely commensurable with $\Sp(4k+2,\mathds{Z})$ when $n=2k+1$. By Proposition \ref{propclassifactionlattice} and Proposition \ref{propclassificationlatticephi}, there exists a totally real number field $F$ and a quaternion division algebra $A$ over $F$ such that $A$ splits over exactly one real place $\sigma_0$ of $F$ and an order $\mathcal{O}$ of $A$ satisfying $$\phi_n(\mathcal{O}^1)<\Lambda.$$ Since $A$ is a division algebra, $\mathcal{O}^1$ embeds as a cocompact lattice in $\SL(2,\mathds{R})$. Let $g\geq2$ and $j:\pi_1(S_g)\hookrightarrow\mathcal{O}^1$ be an embedding.

    We can write $A=(a,b)_F$ with $a,b\in\mathcal{O}_F$ such that $a$ and $b$ are positive at $\sigma_0$. Hence the norm $1$ elements of $A$ embed in $\SL(2,\mathds{R})$ as
    \begin{equation*}
   \begin{Bmatrix}
   \begin{pmatrix}
   x_0+\sqrt{a}x_1&\sqrt{b}x_2+\sqrt{ab}x_3\\
   \sqrt{b}x_2-\sqrt{ab}x_3&x_0-\sqrt{a}x_1
   \end{pmatrix}\ |\ x_i\in F,\ \Det=1
   \end{Bmatrix}.
\end{equation*}
Lemma 5.1 in \cite{Audibert_Zariskidensesurfacegroups} shows that there exists a simple closed curve $\gamma$ on $S_g$ with image in $\SL(2,\mathds{R})$ which is diagonal.\footnote{The proof of Lemma 5.1 in \cite{Audibert_Zariskidensesurfacegroups} can be adapted from $\mathds{Z}$ to $\mathcal{O}_F$ by noting that we cannot have $0<\sigma(x_0^2-ax_1^2)<1$ for all real embeddings of $F$ in $\mathds{R}$, since the Galois norm of $x_0^2-ax_1^2$ is an integer.} Note that $A^1=\prescript{}{T^{a,b}}{\SL_2(F)}$ and thus $\Lambda=\prescript{}{\eta\otimes T^{a,b}}{\Sp_{2n}(\mathcal{O}_F)}$.

There exist automorphisms $\varphi_i$ of $\pi_1(S_g)$, $1\leq i\leq n$, that fixe $\gamma$ such that the representations $j_i:=j\circ\varphi_i$ are pairwise non $\PGL(2,\mathds{R})$-conjugate. These automorphisms can be induced by Dehn twists along a simple closed curve disjoint from $\gamma$. Define
    $$\rho:\pi_1(S)\xhookrightarrow{\prod j_i}\SL(2,\mathds{R})^n\hookrightarrow\Sp(2n,\mathds{R})$$
where the last map is the diagonal embedding. The image of $\rho$ is in $\Lambda$ and has Zariski-closure $\SL(2,\mathds{R})^n$ by Lemma 5.13 in \cite{Audibert_ZariskidenseHitchinrepresentationsuniformlattices}. Furthermore it is a continuous deformation of $\phi_n\circ j$.

Let $\mathcal{C}$ be the subgroup of $\Sp(2n,\mathds{R})$ that consists of matrices of the form \eqref{equationcommutator} with $\det((a_{ij})_{ij})=1$. It is isomorphic to $\SL(n,\mathds{R})$ and centralize $\rho(\gamma)$.
By Lemma \ref{lemmacommutatorlattice}, $\Lambda\cap\mathcal{C}$ is a lattice in $\mathcal{C}$. Hence by Borel's Density Theorem \cite[Corollary 4.5.6]{Morris_IntroductionArithmeticGroups}, there exists $B\in\Lambda\cap\mathcal{C}$ such that if $I\subset\{1,\dots,n\}$ satsifies $$B(\oplus_{i\in I}\textrm{V}_i)=\oplus_{j\in J}\textrm{V}_j$$ for some $J\subset\{1,\dots,n\}$ then $I=\varnothing$ or $I=\{1,\dots,n\}$. We will deform the representation $\rho$ by $B$. If $\gamma$ is separating, we can write $\pi_1(S_g)$ as an amalgamated product along $\gamma$. If $\gamma$ is non-separating, we can write $\pi_1(S_g)$ as an HNN-extension. In any case, we define $\rho_{B}$ to be the bending of $\rho$ by $B$, as introduced by Johnson-Millson \cite{Johnson_DeformationSpacesCompactHyperbolicManifolds}. Since $B$ can be continuously deformed into $I_n$ within $\mathcal{C}$, $\rho_{B}$ can be continuously deformed into $\rho$. Hence it is maximal.

We show that $\rho_{B}$ acts irreducibly on $\mathds{R}^{2n}$. Let $\textrm{V}\subset\mathds{R}^{2n}$ be an invariant subset of $\rho_{B}$. Since the Zariski-closure of $\rho_{B}$ contains $\Delta$, $\textrm{V}$ has to be invariant under $\Delta$. Hence there exists $I\subset\{1,\dots,n\}$ such that $\textrm{V}=\oplus_{i\in I}\textrm{V}_i$. If $\gamma$ is separating, $B\textrm{V}$ has to be invariant by $\Delta$. If $\gamma$ is non-separating, $\textrm{V}$ has to be invariant by $B$. In any case, we conclude that $\textrm{V}$ is either $\{0\}$ or $\mathds{R}^{2n}$.
Hence $\rho_{B}$ acts irreducibly on $\mathds{R}^{2n}$. Its Zariski-closure contains $\Delta$ so Lemma \ref{lemmamaximalzariskiclosure} implies that $\rho_{B}$ is Zariski-dense in $\Sp(2n,\mathds{R})$.

We claim that the sequence of representations $(\rho_{B^k})_{k\geq1}$ give rise to infinitely many $\MCG(S_g)$-orbits of representations. Suppose not. Denote by $\Gamma_k$ the image of $\rho_{B^k}$. There exist $k_1,\dots,k_l$ such that every $\Gamma_k$ is conjugated to one of the $\Gamma_{k_i}$, $1\leq i\leq l$. By the Strong Approximation Theorem, see Theorem \ref{propstrongapproximationadapted}, there exists a finite set $\Omega$ of prime ideals of $\mathcal{O}_F$ such that for every $1\leq i\leq l$ and every prime ideal $\mathcal{P}\not\in\Omega$, the reduction of $\Gamma_{k_i}$ modulo $\mathcal{P}$ is $\Sp(2n,\mathcal{O}_F/\mathcal{P})$. By Lemma \ref{lemmareduction}, the reduction of every $\Gamma_k$ modulo $\mathcal{P}\not\in\Omega$ is surjective. Pick $\mathcal{P}\not\in\Omega$ and $k$ such that $B^k$ is trivial modulo $\mathcal{P}$. Then the reduction of $\rho_{B^k}$ modulo $\mathcal{P}$ is equal to the reduction of $\rho$ and thus is not surjective. This is a contradiction.
\end{proof}

\noindent
\textbf{Acknowledgement.}
The author is indebted to Andrés Sambarino for his guidance, to Xenia Flamm for her careful reading of this article and to the referee for useful comments.

\bibliographystyle{alpha}
\bibliography{main}

\end{document}